\newcommand\cprime\textquotesingle 
\tikzset{->-/.style={decoration={markings,mark=at position #1 with {\color{black}\arrow{>}}},postaction={decorate,very thick}}}
\tikzstyle{vertex}=[circle, draw, inner sep=0pt, minimum size=6pt]
\providecommand \@dotsep{5} \def\listtodoname{List of Todos} \def\listoftodos{\@starttoc{tdo}\listtodoname} \makeatother 
\patchcmd{\@startsection}{\@afterindenttrue}{\@afterindentfalse}{}{}             
\patchcmd{\part}{\bfseries}{\bfseries\LARGE}{}{}
\patchcmd{\section}{\scshape}{\bfseries}{}{}\renewcommand{\@secnumfont}{\bfseries} 
\patchcmd{\@settitle}{\uppercasenonmath\@title}{\large}{}{}
\patchcmd{\@setauthors}{\MakeUppercase}{}{}{}
\theoremstyle{plain}
\newtheorem{thm}{Theorem}[section]
\newtheorem{cor}[thm]{Corollary}
\newtheorem{lemma}[thm]{Lemma}
\newtheorem{prop}[thm]{Proposition}
\newtheorem*{thm*}{Theorem}
\newtheorem*{lem*}{Lemma}
\newtheorem{conj}[thm]{Conjecture}
\theoremstyle{definition}
\newtheorem{df}[thm]{Definition}
\newtheorem{rem}[thm]{Remark}
\newtheorem*{df*}{Definition}
\newtheorem*{ex*}{Example}
\newtheorem*{rem*}{Remark}
\DeclareRobustCommand{\gobblefour}[5]{}    
\DeclareFontFamily{OT1}{pzc}{}                                
\DeclareFontShape{OT1}{pzc}{m}{it}{<-> s * [1.10] pzcmi7t}{}
\DeclareMathAlphabet{\mathpzc}{OT1}{pzc}{m}{it}
\DeclareSymbolFont{sfoperators}{OT1}{bch}{m}{n} \DeclareSymbolFontAlphabet{\mathsf}{sfoperators} \makeatletter\def\operator@font{\mathgroup\symsfoperators}\makeatother 
\DeclareSymbolFont{cmletters}{OML}{cmm}{m}{it}
\DeclareSymbolFont{cmsymbols}{OMS}{cmsy}{m}{n}
\DeclareSymbolFont{cmlargesymbols}{OMX}{cmex}{m}{n}
\DeclareMathSymbol{\myjmath}{\mathord}{cmletters}{"7C}     \let\jmath\myjmath 
\DeclareMathSymbol{\myamalg}{\mathbin}{cmsymbols}{"71}     
\DeclareMathSymbol{\mycoprod}{\mathop}{cmlargesymbols}{"60}\let\coprod\mycoprod
\DeclareMathSymbol{\myalpha}{\mathord}{cmletters}{"0B}     \let\alpha\myalpha 
\DeclareMathSymbol{\mybeta}{\mathord}{cmletters}{"0C}      \let\beta\mybeta
\DeclareMathSymbol{\mygamma}{\mathord}{cmletters}{"0D}     \let\gamma\mygamma
\DeclareMathSymbol{\mydelta}{\mathord}{cmletters}{"0E}     \let\delta\mydelta
\DeclareMathSymbol{\myepsilon}{\mathord}{cmletters}{"0F}   \let\epsilon\myepsilon
\DeclareMathSymbol{\myzeta}{\mathord}{cmletters}{"10}      \let\zeta\myzeta
\DeclareMathSymbol{\myeta}{\mathord}{cmletters}{"11}       \let\eta\myeta
\DeclareMathSymbol{\mytheta}{\mathord}{cmletters}{"12}     \let\theta\mytheta
\DeclareMathSymbol{\myiota}{\mathord}{cmletters}{"13}      \let\iota\myiota
\DeclareMathSymbol{\mykappa}{\mathord}{cmletters}{"14}     \let\kappa\mykappa
\DeclareMathSymbol{\mylambda}{\mathord}{cmletters}{"15}    \let\lambda\mylambda
\DeclareMathSymbol{\mymu}{\mathord}{cmletters}{"16}        \let\mu\mymu
\DeclareMathSymbol{\mynu}{\mathord}{cmletters}{"17}        \let\nu\mynu
\DeclareMathSymbol{\myxi}{\mathord}{cmletters}{"18}        \let\xi\myxi
\DeclareMathSymbol{\mypi}{\mathord}{cmletters}{"19}        \let\pi\mypi
\DeclareMathSymbol{\myrho}{\mathord}{cmletters}{"1A}       \let\rho\myrho
\DeclareMathSymbol{\mysigma}{\mathord}{cmletters}{"1B}     \let\sigma\mysigma
\DeclareMathSymbol{\mytau}{\mathord}{cmletters}{"1C}       \let\tau\mytau
\DeclareMathSymbol{\myupsilon}{\mathord}{cmletters}{"1D}   \let\upsilon\myupsilon
\DeclareMathSymbol{\myphi}{\mathord}{cmletters}{"1E}       \let\phi\myphi
\DeclareMathSymbol{\mychi}{\mathord}{cmletters}{"1F}       \let\chi\mychi
\DeclareMathSymbol{\mypsi}{\mathord}{cmletters}{"20}       \let\psi\mypsi
\DeclareMathSymbol{\myomega}{\mathord}{cmletters}{"21}     \let\omega\myomega
\DeclareMathSymbol{\myvarepsilon}{\mathord}{cmletters}{"22}\let\varepsilon\myvarepsilon
\DeclareMathSymbol{\myvartheta}{\mathord}{cmletters}{"23}  \let\vartheta\myvartheta
\DeclareMathSymbol{\myvarpi}{\mathord}{cmletters}{"24}     \let\varpi\myvarpi
\DeclareMathSymbol{\myvarrho}{\mathord}{cmletters}{"25}    \let\varrho\myvarrho
\DeclareMathSymbol{\myvarsigma}{\mathord}{cmletters}{"26}  \let\varsigma\myvarsigma
\DeclareMathSymbol{\myvarphi}{\mathord}{cmletters}{"27}    \let\varphi\myvarphi
\DeclareMathOperator{\Hom}{Hom}
\DeclareMathOperator{\GL}{GL}
\DeclareMathOperator{\PGL}{PGL}
\DeclareMathOperator{\Bun}{{Bun}}
\DeclareMathOperator{\PBun}{{\mathbb{P}Bun}}
\DeclareMathOperator{\Pic}{{Pic}}
\DeclareMathOperator{\Ext}{{Ext}}
\newcommand\A{{\mathbb A}}
\newcommand\C{{\mathbb C}}
\newcommand\F{{\mathcal F}}
\newcommand\G{{\mathcal G}}
\renewcommand\P{{\mathbb P}}
\newcommand\Q{{\mathbb Q}}
\newcommand\cA{{\mathcal A}}
\newcommand\cH{{\mathcal H}}
\newcommand\cO{{\mathcal O}}
\newcommand{\E}{\mathcal E}
\newcommand{\Line}{\mathcal L}
\newcommand{\Fq}{\mathbb{F}_q}
\renewcommand\geq{\geqslant}
\renewcommand\leq{\leqslant}
\newcommand{\norm}[1]{|#1|}
\renewcommand\emptyset\varnothing
\title{On unramified automorphic forms over the projective line}
\author{Roberto Alvarenga}
\address{\rm Roberto Alvarenga, Instituto de Ci\^encias Matem\'aticas e de Computa\c{c}\~ao - USP, S\~ao Carlos, Brazil}
\email{alvarenga@icmc.usp.br}
\author{Valdir Pereira J\'unior}
\address{\rm Valdir Pereira J\'unior, Brazil}
\email{valdirjosepereirajunior@gmail.com}
\begin{document}

\begin{abstract} Let $q$ be a prime power and $\Fq$ be the finite field with $q$ elements. In this article we investigate the space of unramified automorphic forms for $\PGL_n$ over the rational function field defined over $\Fq$ (i.e.\ for $\mathbb{P}^1$ defined over $\Fq$). 
In particular, we  prove that the space of unramified cusp form is trivial and (for $n=3$) that the space of eigenforms is one dimensional. Moreover, we show that there are no nontrivial unramified toroidal forms for $\PGL_3$ over $\mathbb{P}^1$ and conjecture that the space of all toroidal automorphic forms is trivial.   
\end{abstract}

\maketitle


\section{Introduction}
\label{introduction} 


Automorphic forms are central objects in modern number theory, for instance, they play a key role in the celebrated Langlands program, see eg.\  \cite{langlands}. The goal of this work is to probe the space of unramified automorphic forms for $\PGL_n$ over the projective line.

Let $F$ be  either a local or a global field, such as the function field attached to a nonsingular projective curve. In the context of understanding the absolute Galois group $\mathrm{Gal}(\overline{F},F)$, which is a fundamental problem on number theory, automorphic forms play the following role. As the theory of representation of groups suggests, we may study $\mathrm{Gal}(\overline{F},F)$ through its finite dimension representations. 
Given a finite dimensional representation $\rho:\mathrm{Gal}(\overline{F},F) \rightarrow \GL_n(k) $, where $k$ equals to $ \C$ or $\mathbb{Q}_{\ell}$ for $\ell$ a prime number different from the characteristic of $F$, there exists $L(s, \rho)$ an analytic invariant attached to $\rho$, its $L$-function. Hence we might investigate $\rho$, and thus understand $\mathrm{Gal}(\overline{F},F)$, by means of its $L$-function. For $n=1$, this approach is fundamental for the development of abelian class field theory and the understanding of $\mathrm{Gal}(\overline{F},F)^{ab}$. 

On the other side, when $F$ is a global field, we might consider $\mathbb{A}$ its adelic ring and the so-called \textit{automorphic representations (forms)} $\pi$  of $\GL_n(\mathbb{A})$. Here is where emerge the main object of this paper. Attached to each automorphic representation $\pi$, we also have an analytic invariant $L(s, \pi)$, its $L$-function. The celebrated Langlands conjectures predict the existence of a correspondence between the $n$-dimensional representations of $\mathrm{Gal}(\overline{F},F)$  and the automorphic representation of $\GL_n(\mathbb{A})$ which preserves in some sense the respective $L$-functions. We refer \cite[Chap. 10 and 12]{dennis} for a complete discussion about how automorphic forms (representations) plays a role in the  Langlands program.

\subsection*{Cusp forms} When $F$ is the function field of a nonsingular projective geometrically irreducible curve $X$, automorphic forms with trivial central character and fixed ramification can be identified with continuous functions on the double quotient 
$$ \GL_n(F)Z(\mathbb{A}) \setminus \GL_n(\mathbb{A})/K$$ 
that are of moderate growth (see Section \ref{sec-back} for precise definitions).  Where $K$ stands for a compact open subgroup of $\GL_n(\mathbb{A})$ and $Z(\mathbb{A})$ is the center of $\GL_n(\mathbb{A})$.
Hence the domain of a \textit{cusp} automorphic form (see Definition \ref{def-cusp}) with fixed ramification is discrete modulo the right-action of a compact open subgroup $K$ of $\GL_n(\mathbb{A})$, which allows for explicit descriptions, see Lemma \ref{lemma-geocusp}.
In this context, roughly speaking,  the geometric Langlands correspondence(cf.\ \cite{drinfeld} and \cite{lafforgue})
states the existence of a bijective map between 
\begin{enumerate}

\item the set isomorphism classes of irreducible continuous representations 
\[\rho:\mathrm{Gal}(\overline{F},F) \rightarrow \GL_n(\overline{\Q_{\ell}}) \] which are unramified outside a finite number of places and whose determinant is of finite order and,

\item the space of $\overline{\Q}_{\ell}$-valued automorphic representations which are \textit{cuspidal} and whose central character is of finite order.
\end{enumerate}
Moreover, above correspondence is the bijection given by global class field theory for $n=1$ and ``preserves'' the respective $L$-functions for $n>1$. We prove in section \ref{sec-cusp} that the space of cusp forms over the projective line is trivial. See Definition \ref{def-cusp} for the definition of cusp forms.

\subsection*{$\mathcal{H}_{K}$-eigenforms} Let $\mathcal{O}_{\mathbb{A}} := \prod \mathcal{O}_x $ where the product is taken over all places $x$ of $F$ and  $\mathcal{O}_x $ is the ring of integers of the completion of $F$ with respect to $x$. When $K$ above is given by $\mathrm{GL}_n(\mathcal{O}_{\mathbb{A}})$, the standard maximal compact open subgroup of $\GL_n(\mathbb{A})$, we have the so-called \textit{unramified} automorphic forms.  For global function fields, the behave of unramified automorphic forms are particularly nice: every
unramified automorphic representation contains a unique unramified automorphic form,
or spherical vector, up to scalar multiples. Therefore the unramified automorphic representations correspond to certain $1$-dimensional representations of the (commutative)
spherical Hecke algebra $\mathcal{H}_K$, see Definition \ref{def-hecke}. In other words, unramified automorphic representations
correspond to eigenforms of $\mathcal{H}_K$ and are therefore determined by their eigenvalues
under the action of Hecke operators. We use the explicit description of the \textit{graphs of Hecke operators} introduced by Lorscheid in \cite{oliver-graphs} to parametrize the spaces of $\mathcal{H}_{K}$-eigenforms when $F$ is the rational function field.

\subsection*{Toroidal forms} Associated to a degree $n$ separable extension of $F$ there exists a maximal torus of $\GL_n$. The automorphic forms which vanish the integral along that torus, for every degree $n$ separable extension of $F$, are called \textit{toroidal} automorphic forms, cf.\ Definition \ref{def-toroidal}. When $F= \Q$, these special class of automorphic forms are closed related to the classical Riemann hypothesis, cf.\ \cite{zagier}. When $F$ is the rational function field over $\Fq$, we prove in section \ref{sec-toroidal}  that there does not exist nontrivial (unramified) toroidal automorphic forms. Moreover, we conjecture that this is the situation for the whole space of toroidal automorphic forms for every $n$.

\subsection*{Acknowledgements} The authors would like to thank Mikhail Kapranov for fruitful exchange of emails about formula (\ref{cuspeq}). The first author was supported by FAPESP 
[grant number 2022/09476-7].


\section{Background}\label{sec-back}

In this first section we set up the notation  used throughout the paper.
Let $F$ be a global function field defined over a finite field $\mathbb{F}_q$, where $q$ is a prime power. We might regard $F$ as the function field of a geometrically irreducible smooth projective curve $X$ defined over $\Fq$. We shall make it clear when we would like to specialize $X$ to be the projective line $\mathbb{P}^1$ defined over $\Fq$. The reason for employ both notations is because some of the definitions and properties which we write through the article is true for an arbitrary curve.

In the subsequent section we shall assume that $X$ is the projective line $\mathbb{P}^1$ defined over $\Fq$ and $F$ its function field. 

Let $g$ stand for the genus of $X$ and $|X|$ be the set of closed points of $X$ or, equivalently, the set of places in $F$. For $x \in |X|$, we denote by $F_x$ the completion of $F$ at $x$, by $\mathcal{O}_x$ the ring of integers of $F_x$, by $\pi_x \in \mathcal{O}_x$ (we can assume $\pi_x \in F$) a uniformizer of $x$ and by $q_x$ the cardinality of the residue field $\kappa(x):=\mathcal{O}_x/(\pi_x) \cong \mathbb{F}_{q_x}.$ Moreover, we denote by $|x|$  the degree of $x$, which is defined to be the extension fields degree $[\kappa(x) : \Fq]$. In other words, $q_x = q^{|x|}$. 
Let $| \cdot |_x$ the absolute value of $F_x$ (resp. $F$) such that $|\pi_x|_x = q_{x}^{-1},$ we call $| \cdot |_x$ the local norm for each $x \in |X.|$

Let $\mathbb{A}$ be the adele ring of $F$ and $\mathbb{A}^{*} $ the idele group. We denote $\mathcal{O}_{\mathbb{A}} := \prod \mathcal{O}_x $, where the product is
taken over all places $x$ of $F$. The idele norm is the quasi-character $|\cdot | : \mathbb{A}^{*} \rightarrow  \C^{*}$ that sends
an idele $(a_x) \in \mathbb{A}^{*}$ to the product $\prod |a_x |_x$ over all local norms. By the product formula, this defines a quasi-character on the idele class group $\mathbb{A}^{*}/ F^{*}.$

We might assume $F_x$ being embedded into the adele ring $\mathbb{A}$ by sending an element $a \in F_x$ to the adele $(a_y)_{y \in |X|}$ with $a_x = a$ and $a_y = 0$ for $y \neq x$. Not quite compatible with this embedding, we think of the unit group $F_{x}^{*}$ as a subgroup of the idele group $\mathbb{A}^{*}$  by sending an element $b$ of $F_{x}^{*}$
to the idele $(b_y)$ with $b_x = b$ and $b_y = 1$ for $y \neq x$. We will explain in case of ambiguity, which of these embeddings we use.

Let $G(\mathbb{A}):= \mathrm{GL}_n(\mathbb{A}),$ $Z(\mathbb{A})$ be the center of $G(\mathbb{A})$, $G(F):= \mathrm{GL}_n(F)$ and $K:= \mathrm{GL}_n(\mathcal{O}_{\mathbb{A}})$ the standard maximal compact open subgroup of $G(\mathbb{A})$. Note that $G(\mathbb{A})$ comes together with an adelic topology that turns $G(\mathbb{A})$ into a locally compact group. Hence $G(\mathbb{A})$ is endowed with a Haar measure. We fix the Haar measure on $G(\mathbb{A})$ for which $\mathrm{vol}(K)=1.$ The topology of $G(\mathbb{A})$ has a neighborhood basis $\mathcal{V}$ of the identity matrix that is given by all subgroups
$$K' = \prod_{x \in |X|} K_{x}' < \prod_{x \in |X|}K_x = K$$
where $K_x := \mathrm{GL}_n(\mathcal{O}_x)$, such that for all $x \in |X|$ the subgroup $K_{x}'$ of $K_x$ is open and consequently of finite index and such that $K_{x}^{'}$ differs from $K_x$ only for a finite number of places. 

\subsection*{Hecke algebra} Consider the space $C^0(G(\mathbb{A}))$ of continuous functions $f: G(\mathbb{A}) \rightarrow \C.$ Such a function is called \textit{smooth} if it is locally constant. The group $G(\mathbb{A})$ acts on $C^0(G(\mathbb{A}))$ through the right regular representation
$$\rho : G(\mathbb{A}) \rightarrow \mathrm{Aut}(C^0(G(\mathbb{A}))),$$ 
which is defined by right translation of the argument: $(g.f)(h):= (\rho(g)f)(h) := f(hg)$ for $g,h \in G(\mathbb{A})$ and $f \in C^0(G(\mathbb{A})).$

Let $H$ be a subgroup of $G(\mathbb{A})$. We say that $f \in C^{0}(G(\mathbb{A}))$ is \textit{left} or \textit{right} $H$-\textit{invariant} if for all $h \in H$ and $g \in G(\mathbb{A})$, 
$ f(hg) = f(g) \text{ or } f(gh)= f(g)$,  respectively.
If $f$ is right and left $H$-invariant, it is called \textit{bi}-$H$-\textit{invariant}.

\begin{df} \label{def-hecke} The complex vector space $\mathcal{H}$ of all smooth compactly supported functions $\Phi : G(\mathbb{A}) \rightarrow \C$ together with the convolution product
$$\Phi_1 \ast \Phi_2: g \longmapsto \int_{G(\mathbb{A})} \Phi_1(gh^{-1})\Phi_2(h)dh$$
for $\Phi_1, \Phi_2 \in \mathcal{H}$ is called the \textit{Hecke algebra for }$G(\mathbb{A})$. Its elements are called \textit{Hecke operators}.
\end{df}

The Hecke algebra $\mathcal{H}$ acts on $C^{0}(G(\mathbb{A}))$ by
$$\Phi(f): g \longmapsto \int_{G(\mathbb{A})} \Phi(h) f(gh) dh.$$
We say that a function $f \in C^{0}(G(\mathbb{A}))$ is $\mathcal{H}$-\textit{finite} if the space $\mathcal{H}\cdot f$ is finite dimensional.

The zero element of $\mathcal{H}$ is the zero function, but there is no multiplicative unit. For $K' \in \mathcal{V},$ we define $\mathcal{H}_{K'}$ to be the subalgebra of all bi-$K'$-invariant elements. These subalgebras have multiplicative units. Namely, the normalized characteristic function $\epsilon_{K'} := (\mathrm{vol}K')^{-1} \mathrm{char}_{K'}$ acts as the identity on $\mathcal{H}_{K'}$ by convolution. 

\begin{df}
When $K'=K$ above, we call $\mathcal{H}_K$ the \textit{spherical (unramified) part of} $\mathcal{H}$ and its elements are called \textit{spherical (unramified) Hecke operators}. 
\end{df}

Every $\Phi \in \mathcal{H}$ is bi-$K'$-invariant for some $K' \in \mathcal{V}$, see \cite[Lemma 1.4.3 and Prop. 1.4.4]{oliver-thesis}. In particular, 
$\mathcal{H} = \bigcup_{K' \in \mathcal{V}} \mathcal{H}_{K'}.$ Furthermore,
the spherical Hecke algebra $\mathcal{H}_K$ can be explicit described as follows.  We fix $ 1 \leq r \leq n$ an integer and let $x$ be a place of $F$. Let $\Phi_{x,r}$ stand for the characteristic function of
\[K\left( \begin{matrix}
\pi_x \rm{I}_r &  \\ 
 & \rm{I}_{n-r}
\end{matrix}   \right)K\] 
where $\rm{I}_r$ (resp. $ \rm{I}_{n-r}$) stands for the $r \times r$ (resp.  $(n-r) \times (n-r)$) identity matrix and the empty entry in the matrix means a zero entry. Observe that $\Phi_{x,n}$ is invertible and its inverse is given by the characteristic function of $K (\pi_x I_n)^{-1} K.$ The following theorem, due to Satake, describes $\mathcal{H}_K$ as a commutative (almost) polynomial algebra.

\begin{thm} \label{satake}  Identifying $\epsilon_K$ with $1 \in \C$ yields 
$$\mathcal{H}_K \cong \C[\Phi_{x,1}, \ldots, \Phi_{x,n}, \Phi_{x,n}^{-1}]_{x \in |X|}.$$
an isomorphism of $\C$-algebras. In particular, $\mathcal{H}_K $ is commutative. 
\end{thm}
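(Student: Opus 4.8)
The plan is to reduce the global statement to a purely local assertion at each place and then prove the local Satake isomorphism by combining the Cartan decomposition with a triangularity estimate for convolution. First I would factorize the spherical Hecke algebra into its local constituents. Since $K=\prod_x K_x$ and every compactly supported bi-$K$-invariant function is a finite $\C$-linear combination of products of local bi-$K_x$-invariant functions, almost all of which are the local unit $\epsilon_{K_x}$, one obtains a restricted tensor product decomposition
\[
\mathcal{H}_K \;\cong\; {\bigotimes_{x \in |X|}}' \, \mathcal{H}_{K_x},
\]
where $\mathcal{H}_{K_x}$ is the local spherical Hecke algebra of compactly supported bi-$K_x$-invariant functions on $\GL_n(F_x)$. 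This reduces the theorem to proving, for each fixed place $x$, that $\mathcal{H}_{K_x}\cong\C[\Phi_{x,1},\ldots,\Phi_{x,n},\Phi_{x,n}^{-1}]$.

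Next I would produce an explicit basis via the Cartan (elementary divisor) decomposition: every $g\in\GL_n(F_x)$ can be written as $g=k_1\,\mathrm{diag}(\pi_x^{a_1},\ldots,\pi_x^{a_n})\,k_2$ with $k_1,k_2\in K_x$ and uniquely determined integers $a_1\geq\cdots\geq a_n$. Hence the double cosets $K_x\backslash\GL_n(F_x)/K_x$ are parametrized by dominant tuples $\lambda=(a_1\geq\cdots\geq a_n)$, and their characteristic functions $c_\lambda$ form a $\C$-basis of $\mathcal{H}_{K_x}$. Under this identification $\Phi_{x,r}=c_{\lambda_r}$ for the fundamental tuple $\lambda_r=(1^r,0^{n-r})$, while the tuple $(1^n)$ corresponds to the scalar matrix $\pi_x\mathrm{I}_n$; being central, its characteristic function is manifestly invertible with inverse $c_{(-1,\ldots,-1)}$, which is precisely $\Phi_{x,n}^{-1}$.

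Commutativity I would obtain from Gelfand's trick: the transpose $g\mapsto{}^{t}g$ is an anti-automorphism of $\GL_n(F_x)$ that fixes $K_x$ and preserves every double coset, since it fixes diagonal matrices; it therefore acts trivially on $\mathcal{H}_{K_x}$ while reversing convolution, forcing $\mathcal{H}_{K_x}$ to be commutative. The heart of the argument, and the step I expect to be the main obstacle, is the \emph{triangularity} computation. Ordering the dominant tuples by the dominance partial order $\preceq$, one shows that for any sequence of indices the convolution $\Phi_{x,r_1}\ast\cdots\ast\Phi_{x,r_k}$ equals $c_\lambda$ with $\lambda=\lambda_{r_1}+\cdots+\lambda_{r_k}$, plus a $\C$-linear combination of $c_\mu$ with $\mu\prec\lambda$. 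Establishing this requires a careful enumeration of the $K_x$-orbits occurring in a product of two double cosets, together with an estimate that their elementary divisors are dominated by the sum $\lambda$ and that the top coset appears with multiplicity exactly one.

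Granting the triangularity, the proof concludes by a leading-term argument. Writing $\lambda_i=\sum_{r\geq i} m_r$, the assignment $\lambda\mapsto(m_1,\ldots,m_n)$ with $m_i=\lambda_i-\lambda_{i+1}$ for $i<n$ and $m_n=\lambda_n$ identifies dominant tuples with $\Z_{\geq 0}^{n-1}\times\Z$, so the monomials $\Phi_{x,1}^{m_1}\cdots\Phi_{x,n}^{m_n}$ (with $m_n\in\Z$, accounting for the unbounded last coordinate and the need to invert $\Phi_{x,n}$) are in bijection with the basis elements $c_\lambda$, each having leading term $c_\lambda$. Distinct monomials thus have distinct leading terms, which simultaneously proves, by descending induction on $\preceq$, that the $\Phi_{x,r}$ generate $\mathcal{H}_{K_x}$ and that they are algebraically independent modulo the single relation inverting $\Phi_{x,n}$. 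Assembling these local isomorphisms through the tensor product decomposition yields the stated global isomorphism, and the commutativity of $\mathcal{H}_K$ follows at once.
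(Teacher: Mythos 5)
The paper contains no argument for this theorem: its ``proof'' is the single citation \cite[Chapter 12, 1.6]{dennis}, so the only meaningful comparison is with the standard proof in the literature, and your proposal is a faithful reconstruction of exactly that argument. Your chain of steps --- the restricted tensor product factorization $\mathcal{H}_K \cong {\bigotimes}'_{x}\,\mathcal{H}_{K_x}$ (valid because characteristic functions of global double cosets factor into local ones, with almost all factors equal to $\epsilon_{K_x}$), the Cartan decomposition parametrizing $K_x\backslash \GL_n(F_x)/K_x$ by dominant integer tuples, the Gelfand transpose trick for commutativity (transpose fixes $K_x$ and each double coset because every coset has a diagonal representative, while reversing convolution), and the leading-term argument in the dominance order with the bijection $m_i=\lambda_i-\lambda_{i+1}$ for $i<n$, $m_n=\lambda_n\in\Z$ --- is the canonical route, and it correctly delivers both generation by $\Phi_{x,1},\ldots,\Phi_{x,n},\Phi_{x,n}^{-1}$ and the absence of relations beyond inverting $\Phi_{x,n}$, hence the stated isomorphism with the (almost) polynomial algebra.

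Two caveats, neither fatal. First, the triangularity lemma --- $c_\alpha \ast c_\beta = c_{\alpha+\beta} + \sum_{\mu\prec\alpha+\beta} n_\mu c_\mu$ with the top coset of multiplicity exactly one --- is stated precisely and correctly but only ``granted''; you are right that it is the technical heart (an elementary-divisor support estimate plus a count of the leading double coset), and until it is supplied your text is an outline rather than a complete proof, though the lemma is true and standard, and this level of detail already exceeds that of the paper, which asserts the whole theorem by citation. Second, two small points should be recorded for the induction to be airtight: for a fixed dominant $\lambda$ the set of dominant $\mu$ with $\mu\preceq\lambda$ and the same sum is finite (all entries of such $\mu$ are bounded in terms of $\lambda_1$ and the sum), so the descending induction terminates; and convolution with $\Phi_{x,n}^{\pm 1}$ is exact translation of the index by $(\pm 1,\ldots,\pm 1)$ with no lower-order terms, which is what makes the negative powers of $\Phi_{x,n}$ harmless in the leading-term computation.
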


\begin{proof} See \cite[Chapter 12, 1.6]{dennis}. 
\end{proof}

\subsection*{Automorphic forms}
A function $f \in C^{0}(G(\mathbb{A}))$ is called $K$-\textit{finite} if the complex vector space that is generated by $\{\rho(k)f\}_{k \in K}$ is finite dimensional.

We embed $G(\mathbb{A}) \hookrightarrow \mathbb{A}^{n^2 +1}$ via $g \mapsto (g, \det(g)^{-1}).$ We define a local height $\norm{ g_x}_x$ on $G(F_x) :=\mathrm{GL}_n(F_x)$ by 
restricting the height function 
$$(v_1, \ldots, v_{n^2+1}) \mapsto \mathrm{max}\{|v_1|_x, \ldots, |v_{n^2+1}|_x\}$$ 
on $F_{x}^{n^2+1}$. We note that $\norm{ g_x }_x \geq 1$ and that $\norm{ g_x}_x =1$ if $g_x \in K_x.$ We define the global height $\norm{ g}$ to be the product of the local heights. We say that $f \in C^{0}(G(\mathbb{A}))$ is of \textit{moderate growth} if there exists constants $C$ and $N$ such that
$$|f(g)|_{\C} \leq C \norm{ g}^{N}$$  
for all $g \in G(\mathbb{A}).$

Let $V \subset C^{0}(G(\mathbb{A}))$ be a sub-representation of $G(\A)$ on $C^{0}(G(\mathbb{A}))$. For $K' \in \mathcal{V}$, let $V^{K'}$ be the subspace of all $f \in V$ that are right $K'$-invariant i.e. such that $\rho(k')f=f$ for all $k'\in K'$. We say that the representation $V$ is \textit{admissible} if $V^{K'}$ is finite dimensional for every $K' \in \mathcal{V}$. In the following we take $V = \rho(G(\A))f$ for some 
$f \in C^{0}(G(\mathbb{A})).$ 

\begin{df} The \textit{space of automorphic forms} $\mathcal{A}$ (with trivial central character) is the complex vector space of all functions $f \in C^{0}(G(\mathbb{A}))$ which are smooth, $K$-finite, of moderate growth,  left $G(F)Z(\mathbb{A})$-invariant and such that the smooth representation $\rho(G(\A))f$ is admissible. Its elements are called \textit{automorphic forms}. 

\end{df}

\begin{rem} We are actually considering automorphic forms for $\PGL_n$. This is equivalent to consider in previous definition $G = \PGL_n$ and remove the left $Z(\A)$-invariance. However, for technical reasons, we maintain above definition and consider the Hecke algebra over $\GL_n.$ 
\end{rem}

\begin{lemma}  A function $f \in C^{0}(G(\mathbb{A}))$ is smooth and $K$-finite if and only if there is a $K' \in \mathcal{V}$ such that $f$ is right $K'$-invariant. In particular, 
$$V = \bigcup_{K' \in \mathcal{V}} V^{K'}$$ 
for every subspace $V \subseteq \mathcal{A}.$ 
\end{lemma}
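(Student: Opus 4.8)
The plan is to prove the stated equivalence in both directions and then read off the displayed identity as an immediate consequence. Throughout I would keep in mind that an element $K' \in \mathcal{V}$ is by construction an open subgroup of $G(\mathbb{A})$ contained in $K$ and of finite index in $K$: writing $K' = \prod_x K'_x$, one has $K'_x = K_x$ for all but finitely many $x$ and each exceptional $K'_x$ is open, hence of finite index, in $K_x$.

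For the easy direction, suppose $f$ is right $K'$-invariant for some $K' \in \mathcal{V}$. Since $K'$ is open, $f$ is constant on each open coset $gK'$, hence locally constant, i.e.\ smooth. For $K$-finiteness I would observe that $\rho(k_1)f = \rho(k_2)f$ whenever $k_2^{-1}k_1 \in K'$, so that $k \mapsto \rho(k)f$ factors through the finite set of cosets $kK'$ in $K$; therefore $\{\rho(k)f : k \in K\}$ is finite and spans a finite-dimensional subspace.

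The substantive direction is the converse. Assume $f$ is smooth and $K$-finite, and set $W := \langle \rho(k)f : k \in K\rangle$, a finite-dimensional subspace of $C^0(G(\mathbb{A}))$ that is stable under $\rho(K)$ and contains $f$. The key step is to choose finitely many points $g_1, \ldots, g_m \in G(\mathbb{A})$ for which the evaluation map $W \to \mathbb{C}^m$, $w \mapsto (w(g_1), \ldots, w(g_m))$, is injective; such points exist by elementary linear algebra, since the elements of a basis of $W$ are linearly independent functions and hence admit points at which their evaluation matrix is nonsingular. Now I would invoke smoothness: for each $j$, local constancy of $f$ at $g_j$, together with the fact that $\{g_j V : V \in \mathcal{V}\}$ is a neighborhood basis of $g_j$, yields some $V_j \in \mathcal{V}$ with $f(g_j v) = f(g_j)$ for all $v \in V_j$. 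Put $K' := \bigcap_{j=1}^m V_j$; this is again an element of $\mathcal{V}$, the intersection being formed place-by-place. For $k' \in K'$ one has $(\rho(k')f)(g_j) = f(g_j k') = f(g_j)$ for every $j$; since $\rho(k')f \in W$ agrees with $f$ at all the separating points, injectivity of the evaluation map forces $\rho(k')f = f$, i.e.\ $f$ is right $K'$-invariant. I expect this direction to be the main obstacle, and specifically the passage from the pointwise local constancy supplied by smoothness to a \emph{single} open subgroup fixing $f$ on the right, which is exactly where $K$-finiteness enters through the existence of finitely many separating points.

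Finally, for the displayed identity let $V \subseteq \mathcal{A}$ be any subspace. The inclusion $\bigcup_{K' \in \mathcal{V}} V^{K'} \subseteq V$ is immediate since $V^{K'} \subseteq V$ by definition. Conversely, every $f \in V$ is in particular an automorphic form, hence smooth and $K$-finite by the definition of $\mathcal{A}$, so the equivalence just proved produces a $K' \in \mathcal{V}$ for which $f$ is right $K'$-invariant; as $f \in V$, this means $f \in V^{K'}$. Hence $V \subseteq \bigcup_{K'\in\mathcal{V}} V^{K'}$, which completes the proof.
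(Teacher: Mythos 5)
Your proof is correct and complete. One thing to be aware of: the paper itself contains no argument for this lemma---its proof is the single line ``See \cite[Lemma 1.3.2]{oliver-thesis}''---so your write-up is a self-contained substitute for that citation rather than a variant of an argument appearing in the paper. Both directions you give are the standard ones (and follow the structure of the argument in the cited source): for the easy direction you use exactly the two relevant properties of $K' \in \mathcal{V}$, namely openness (constancy on the open cosets $gK'$ gives local constancy) and finite index in $K$ (so $k \mapsto \rho(k)f$ factors through the finite set $K/K'$, giving $K$-finiteness); for the substantive direction, the separating-points device is precisely the right way to convert smoothness at finitely many points, plus finite-dimensionality and $\rho(K)$-stability of $W = \langle \rho(k)f \rangle_{k \in K}$, into invariance under a single $K' \in \mathcal{V}$. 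The two steps you treat as routine are indeed routine and true: separating points exist because the evaluation functionals span the dual space of $W$ (otherwise some nonzero element of $W$ would vanish identically as a function), and $\mathcal{V}$ is closed under finite intersections because the intersection can be formed place-by-place and still agrees with $K_x$ at all but finitely many places. The deduction of the displayed identity $V = \bigcup_{K' \in \mathcal{V}} V^{K'}$ from the equivalence is also exactly as it should be, using that every element of $V \subseteq \mathcal{A}$ is smooth and $K$-finite by the definition of $\mathcal{A}$.
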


\begin{proof} See \cite[Lemma 1.3.2]{oliver-thesis}.
\end{proof}

Hence, from  previous lemma, functions in $\mathcal{A}^{K'}$ can be identified with functions on the double quotient 
\[G(F)Z(\mathbb{A}) \setminus G(\mathbb{A})/K'\]
that are of moderate growth. We call $\cA^{K}$ by the space of \textit{unramified automorphic forms}.

\begin{rem}  Although the subspace $\cA^{K'}$ of $\cA$ is not stable under the action of $G(\A)$, it carries the induced action of the Hecke subalgebra $\cH_{K'}$. 
The Proposition \ref{propfund} re-writes the action of Hecke operators on automorphic forms given by previous integral as a finite sum. This is fundamental to define the \textit{graphs of Hecke operators} which shall be applied to parametrize the space of $\mathcal{H}_{K_x}$-eigenforms, see section \ref{sec-eigenforms}.\end{rem}

   
\section{Cusp forms}   \label{sec-cusp}

In this section we investigate the space of unramified cusp forms for $\PGL_n$ over the projective line. 

\begin{df} \label{def-cusp} An automorphic form $f \in \cA$ is a \textit{cusp form} (or cuspidal) if 
\[ \int_{U(F)\setminus U(\A)} f(ug)du = 0\]
for all $g \in G(\A)$ and all unipotent radicals subgroups $U$ of all standard parabolic subgroups $P$ of $G(\A)$. We denote the whole space of cusp forms by $\cA_{0}$.   If $f \in \cA^{K}$ is a cusp form we call it an \textit{unramified cusp form} and denote the whole space of unramified cusp forms by $\cA_{0}^{K}$. 
\end{df}

\subsection*{Geometric interpretation} Let $\Bun_nX$ be the set of
isomorphism classes of rank $n$ vector bundles on $X$. Let $\P\Bun_n X$ stands for the set of isomorphism classes of rank $n$ projective vector bundles. The vector bundles $\E, \E' \in \Bun_n X$ are in the same class in $\P\Bun_n X$ if there exists a line bundle $\Line \in \Pic X$ such that $\E \cong \E'\otimes \Line.$ In this case we denote $\overline{\E}=\overline{\E'}$ in $\P \Bun_n X.$ 

It is well known that the double quotient $\GL_1(F) \setminus \GL_1(\mathbb{A}) / \GL_1(\cO_{\A})$ is in bijection with the set of classes of divisors on $X$. Hence, it yields the following bijection
\[\GL_1(F) \setminus \GL_1(\mathbb{A}) / \GL_1(\cO_{\A}) \longleftrightarrow \Bun_1 X\]
since the set of classes of divisors on $X$ are in correspondence with the line bundles over $X$.
The following theorem, due to Weil, extends above bijection to higher rank vector bundles on $X$.

\begin{thm}[Weil] \label{weil-them} For every $n \geq 1$, there exists a bijection 
\begin{align*}
\GL_n(F) \setminus \GL_n(\mathbb{A}) / \GL_n(\cO_{\A}) & \longleftrightarrow  \Bun_n(X) \\
g & \longmapsto  \E_g
\end{align*} 
such that $\E_g \otimes \Line_a = \E_{ag}$ for $a \in \A^{\times}$ and $\Line_{a}$ the correspondent line bundle. Moreover, $\deg \E_g = \deg(\det g).$
\end{thm}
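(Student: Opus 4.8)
The plan is to make the correspondence explicit through the lattice description of vector bundles, in the spirit of Weil's adelic uniformization. I would construct the map $g \mapsto \E_g$ directly and then produce its inverse. Given $g = (g_x) \in \GL_n(\A)$, set for each closed point $x$ the $\cO_x$-lattice $L_x := g_x\,\cO_x^n \subset F_x^n$. Since $g_x \in \GL_n(\cO_x) = K_x$ for all but finitely many $x$, one has $L_x = \cO_x^n$ for almost all $x$. I would then define $\E_g$ as the subsheaf of the constant sheaf $F^n$ on $X$ whose sections over an open $U$ are the rational vectors $s \in F^n$ with $s \in L_x$ for every closed point $x \in U$. The first task is to check that $\E_g$ is coherent and locally free of rank $n$: locally at $x$ it coincides with the free $\cO_x$-module $L_x$, and the condition $L_x = \cO_x^n$ for almost all $x$ guarantees coherence of the global patching.

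Next I would establish that the induced map on double cosets is well defined and bijective by exhibiting the inverse via trivializations. Right multiplication $g \mapsto gk$ with $k = (k_x) \in K$ replaces $L_x$ by $g_x k_x \cO_x^n = g_x \cO_x^n = L_x$, so $\E_{gk} = \E_g$; this accounts for the choice of local $\cO_x$-bases. Left multiplication $g \mapsto hg$ with $h \in \GL_n(F)$ applies the single $F$-automorphism $h$ of $F^n$ to all the lattices at once, hence yields an isomorphic bundle $\E_{hg} \cong \E_g$; this accounts for the choice of rational trivialization. Conversely, starting from $\E \in \Bun_n X$, the generic fibre $\E_\eta$ is an $n$-dimensional $F$-vector space (here I use that $X$ is an integral curve), and each completed stalk $\E \otimes_{\cO_X} \cO_x$ is free of rank $n$; comparing an $F$-trivialization $\E_\eta \cong F^n$ with the local $\cO_x$-bases over $F_x$ produces transition elements $g_x \in \GL_n(F_x)$, integral for almost all $x$ because a rational trivialization is integral on a dense open. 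I would then verify that the two assignments are mutually inverse, giving the bijection.

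For the compatibility with twisting and the degree formula, I would reduce everything to the rank one statement. For an idele $a \in \A^\times$ with associated line bundle $\Line_a$, the lattices of $\E_{ag}$ are $a_x g_x \cO_x^n = a_x L_x$, which are precisely the lattices of $\E_g$ multiplied by the lattices $a_x \cO_x$ defining $\Line_a$; hence $\E_{ag} \cong \E_g \otimes \Line_a$, as claimed. For the degree, I would use that the determinant of the lattice data is $\det g_x\,\cO_x$, so $\det \E_g \cong \Line_{\det g}$, reducing $\deg \E_g = \deg(\det g)$ to the $n=1$ bijection together with the standard identification of the degree of a line bundle with the degree of its associated divisor class.

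The main obstacle I anticipate is the sheaf-theoretic heart of the first two steps: verifying that an arbitrary adelic lattice pattern, equal to the standard lattice almost everywhere, genuinely glues to a locally free sheaf of the correct rank, and that every bundle arises this way. This is essentially a Beauville--Laszlo / descent-type gluing statement; I would either invoke it or argue by hand using that a finitely generated torsion-free module over each local ring is free, so that local freeness is automatic and only the finiteness of the non-standard places needs care. The remaining bookkeeping, namely matching the left $\GL_n(F)$- and right $\GL_n(\cO_{\A})$-actions with the two ambiguities in the trivializations and fixing the sign conventions in the degree identity, is routine once the dictionary between lattices and the cosets in $\GL_n(F)\setminus\GL_n(\A)/\GL_n(\cO_{\A})$ is set up.
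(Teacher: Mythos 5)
Your proposal is correct and takes essentially the same approach as the paper: the paper's proof is only a brief sketch --- associate to a vector bundle the adelic matrix coming from its transition maps --- with the details deferred to Lorscheid's thesis (Lemma 5.1.6) and Frenkel (Lemma 3.1), and your lattice construction $g \mapsto (L_x)_x \mapsto \E_g$ together with the trivialization-based inverse is exactly the standard argument those references carry out. The only difference is one of emphasis, not of substance: you build the bundle from the adelic matrix via lattices and recover the matrix as transition data between a generic and the local trivializations, which is precisely the dictionary the paper invokes.
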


\begin{proof} Its well know that vector bundles are completely determined by its transition maps. The bijection follows, essentially, by associate to a vector bundle the adelic matrix given by the stalks of its transition maps. See either \cite[Lemma 5.1.6]{oliver-thesis} or \cite[Lemma 3.1]{frenkel} for a complete proof. 
\end{proof}

\begin{cor}  For every $n \geq 1$, there exists a bijection 
\[ \GL_n(F) Z(\mathbb{A}) \setminus \GL_n(\mathbb{A}) / \GL_n(\cO_{\A})  \longleftrightarrow  \P\Bun_n(X) \]
where $Z(\mathbb{A})$ is the center of $ GL_n(\mathbb{A})$
\end{cor}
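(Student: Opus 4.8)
The plan is to deduce this directly from Weil's theorem (Theorem \ref{weil-them}) by recognizing that adjoining the central factor $Z(\A)$ to the left quotient corresponds, on the bundle side, to allowing tensoring by line bundles, which is precisely the equivalence relation defining $\P\Bun_n X$. First I would record that $Z(\A)$ consists of the scalar ideles $a\mathrm{I}_n$ with $a \in \A^{\times}$, so that the induced action of $z = a\mathrm{I}_n \in Z(\A)$ on the class of $g$ in $\GL_n(F)\setminus\GL_n(\A)/\GL_n(\cO_{\A})$ sends $g$ to $ag$ (the scalar idele $a$ times $g$). By the compatibility clause $\E_{ag} = \E_g \otimes \Line_a$ of Weil's theorem, this action translates on $\Bun_n X$ into $\E_g \mapsto \E_g \otimes \Line_a$.

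With this dictionary in place, I would define the candidate map as the composite of Weil's bijection with the projection $\Bun_n X \to \P\Bun_n X$, namely
\[
\Psi \colon \GL_n(F)Z(\A)\setminus \GL_n(\A)/\GL_n(\cO_{\A}) \longrightarrow \P\Bun_n X, \qquad [g] \longmapsto \overline{\E_g}.
\]
The first task is well-definedness: if $g' = \gamma\,(a\mathrm{I}_n)\,g\,k$ with $\gamma \in \GL_n(F)$, $a \in \A^{\times}$ and $k \in \GL_n(\cO_{\A})$, then Weil's theorem gives $\E_{g'} \cong \E_g \otimes \Line_a$, hence $\overline{\E_{g'}} = \overline{\E_g}$ in $\P\Bun_n X$, so $\Psi$ is independent of the representative. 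Surjectivity is immediate, since Weil's bijection already surjects onto $\Bun_n X$ and the projection onto $\P\Bun_n X$ is surjective.

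The key step is injectivity, and it rests on the surjectivity of the assignment $\A^{\times} \to \Pic X$, $a \mapsto \Line_a$. This surjectivity is exactly the $n=1$ instance of Weil's theorem combined with the identification $\Bun_1 X = \Pic X$ recalled just before the theorem: every line bundle on $X$ is of the form $\Line_a$ for some idele $a$. Granting this, suppose $\overline{\E_g} = \overline{\E_{g'}}$, i.e. $\E_{g'} \cong \E_g \otimes \Line$ for some $\Line \in \Pic X$. Choosing $a \in \A^{\times}$ with $\Line \cong \Line_a$ yields $\E_{g'} \cong \E_g \otimes \Line_a \cong \E_{ag}$, so by the injectivity of Weil's bijection $g'$ and $ag$ represent the same class in $\GL_n(F)\setminus\GL_n(\A)/\GL_n(\cO_{\A})$; since $a\mathrm{I}_n \in Z(\A)$, the classes $[g]$ and $[g']$ coincide in the quotient by $Z(\A)$, proving $\Psi$ injective.

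I expect no serious obstacle here: the statement is a formal transport of structure through Theorem \ref{weil-them}, and the only place requiring genuine input is the surjectivity of $a \mapsto \Line_a$, which is itself the rank-one Weil bijection. The points demanding a little care are purely bookkeeping: identifying $Z(\A)$ with scalar ideles and keeping its central (left) placement compatible with the right-hand quotient, and checking that the degree shift $\deg(\E\otimes\Line_a) = \deg\E + n\deg\Line_a$ is consistent with degrees being well-defined only modulo $n$ on $\P\Bun_n X$.
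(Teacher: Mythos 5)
Your proof is correct and matches the paper's (implicit) approach: the paper states this corollary without proof, treating it as an immediate consequence of Theorem \ref{weil-them} together with the compatibility clause $\E_{ag} = \E_g \otimes \Line_a$ and the rank-one bijection $\GL_1(F)\setminus\GL_1(\A)/\GL_1(\cO_{\A}) \leftrightarrow \Pic X$ recalled just before it. Your writeup simply makes explicit the well-definedness, surjectivity, and injectivity checks that the paper leaves to the reader, with the key input (surjectivity of $a \mapsto \Line_a$) identified correctly.
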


\begin{rem} \label{rem-formsinbundles}
Therefore, due to Weil's theorem, we can interpret unramified automorphic forms $\cA^{K}$ as the space of complex valued functions on $\P\Bun_n(X)$ with some moderate growth condition. 
\end{rem}

The next lemma reinterprets the cuspidal condition in geometric terms. Despite it is already known and largely used in the literature, 
see for example \cite[Sec. 2]{kapranov} and \cite[pag. 296]{bump}, we could not find a proof. Thus we sketch its proof in the following for sake of completeness. 
We first observe, following \cite[Lemma 3]{harish-chandra}, that  $f \in \mathcal{A}^{K}$ is cuspidal if 
\[ \int_{U(F)\setminus U(\A)} f(ug)du = 0\]
for all $g \in G(\A)$ and all unipotent radicals subgroups $U$ of all \textit{maximal} parabolic subgroups $P$ of $G(\A)$.  

We fix $P$ a maximal parabolic subgroup of $G$. It is well known that there exists integers $r,s>0$ with
$r+s=n$ such that $P= UM$ where $U$ is the unipotent of $P$ and radical $M \cong \GL_r \times \GL_s$ is its Levi subgroup,
cf.\ \cite[Cor. 14.19]{Borel-A}. One says that $P$ is a parabolic maximal subgroup of type $(r,s)$. 
Observe furthermore that an element of $U$ has the following form 
\[\left( \begin{matrix}
\rm{I}_{r \times r} & h_{s \times r} \\
 0 & \rm{I}_{s \times s}
\end{matrix} \right) \]
where $\rm{I}_{r \times r}$ (resp. $\rm{I}_{s \times s}$) stands for the $r \times r $  (resp. $s \times s$) identity matrix and $h_{s \times r}$ is a matrix with $r$ rows and $s$ columns. 
That is, $U  \cong M_{r,s}$ where $M_{r,s}$ stands for the additive group of matrices with $r$ rows and $s$ columns. 

From Iwasawa decomposition, $G(\A) = P(\A)K$ where $P$ is the above fixed maximal parabolic subgroup. Given $g \in G(\A)$, we write $g = xmk$ where $x \in U(\A), m \in M(\A)$ and $k \in K.$ 
Hence,  $f \in \mathcal{A}^{K}$ is an unramified cusp form if 
\[ \int_{U(F)\setminus U(\A)} f(ug)du = \int_{U(F)\setminus U(\A)} f(uxmk)du =   
\int_{U(F)\setminus U(\A)} f(ym)dy = 0.\]
for all $m \in M(\A)$ and where $y = ux \in U.$ We write 
\[ y = \left( \begin{matrix}
\rm{I}_{r \times r} & h_{s \times r} \\
 0 & \rm{I}_{s \times s}
\end{matrix} \right)  
\quad \text{ and  } \quad 
m = \left( \begin{matrix}
h_r & 0 \\
 0 & h_s
\end{matrix} \right) \]
where $h_r \in \GL_r (\A)$, $h_s \in \GL_s (\A)$ and $h_{s \times r} \in M_{r,s}(\A)$. 
Let $\F \in \Bun_r X$ (resp. $\G \in \Bun_s X$) be the vector bundle which corresponds to $h_r$ (resp. $h_s$) 
via Theorem \ref{weil-them}.  
Applying once again Weil's theorem, $ym \in G(\A)$ corresponds to an extension of $\G$ by $\F$, see \cite[Sec. 7.3]{lepotier}
and \cite[Ex. 6.6]{shafarevich-2}.
Therefore, considering an unramified automorphic form as a complex valuated map from $\P\Bun_n X$ as observed in \ref{rem-formsinbundles}, above discussion implies the following lemma.

\begin{lemma} \label{lemma-geocusp}
An unramified automorphic form $f \in \cA^{K}$ is a cusp form if for any integers $r , s > 0$ with $r + s = n$ and any vector bundles $\F \in \Bun_{r}X$, $\G \in \Bun_{s}X$, 
\begin{equation}\label{cuspeq}
 \sum_{\E \in \Ext(\F,\G)} f(\E) =0,
\end{equation}
where we abuse the notation and write $\E$ meant the middle term of the correspondent exact sequence. 

\end{lemma}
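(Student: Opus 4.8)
The plan is to take the reduction carried out just before the statement as the backbone and to supply the one quantitative step it leaves open, namely the passage from the cuspidal integral to the finite sum \eqref{cuspeq}. By the Harish-Chandra criterion recalled above it suffices to treat a single maximal parabolic $P=UM$ of type $(r,s)$, and the Iwasawa decomposition together with the right $K$-invariance of the unramified form $f$ reduces the vanishing of $\int_{U(F)\setminus U(\A)}f(ug)\,du$ for all $g$ to the vanishing of
\[
I(m):=\int_{U(F)\setminus U(\A)} f(um)\,du,\qquad m=\begin{pmatrix} h_r & 0\\ 0 & h_s\end{pmatrix},
\]
for every $m\in M(\A)$; here $\F\leftrightarrow h_r$ and $\G\leftrightarrow h_s$ under Weil's Theorem \ref{weil-them}. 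So the heart of the matter is to show that $I(m)$ is a fixed positive multiple of $\sum_{\E\in\Ext(\F,\G)}f(\E)$.

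First I would record two invariances of the integrand $u\mapsto f(um)$ on the compact group $U(F)\setminus U(\A)\cong M_{r,s}(F)\setminus M_{r,s}(\A)$. Writing $u=\left(\begin{smallmatrix} I_r & u'\\ 0 & I_s\end{smallmatrix}\right)$, left $U(F)\subset G(F)$-invariance of $f$ means the integrand descends to the quotient, i.e.\ it is invariant under $u'\mapsto u'+\gamma$ for $\gamma\in M_{r,s}(F)$. For the second invariance I would use that $f$ is right $K$-invariant: since $m^{-1}\left(\begin{smallmatrix} I_r & \varepsilon\\ 0 & I_s\end{smallmatrix}\right)m=\left(\begin{smallmatrix} I_r & h_r^{-1}\varepsilon h_s\\ 0 & I_s\end{smallmatrix}\right)$, one has $f\!\left(u\left(\begin{smallmatrix} I_r & \varepsilon\\ 0 & I_s\end{smallmatrix}\right)m\right)=f(um)$ whenever $h_r^{-1}\varepsilon h_s\in M_{r,s}(\cO_{\A})$, that is, whenever $\varepsilon$ lies in the open compact lattice $L:=h_r M_{r,s}(\cO_{\A})h_s^{-1}$. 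Hence $u'\mapsto f(um)$ factors through the finite group
\[
Q:=M_{r,s}(\A)\big/\big(M_{r,s}(F)+L\big).
\]

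Next I would identify $Q$ with the set of extensions. Via the trivializations $h_r,h_s$, the adelic matrices $M_{r,s}(\A)$ are identified with the adelic sections of $\mathcal{H}om(\G,\F)$, with $M_{r,s}(F)$ corresponding to the rational sections and $L$ to the everywhere-integral sections; the standard adelic description of coherent cohomology then yields a canonical isomorphism $Q\cong H^1\!\big(X,\mathcal{H}om(\G,\F)\big)=\Ext(\F,\G)$. Under this isomorphism the class of $u'$ is sent exactly to the extension whose middle term is the bundle $\E_{um}$ attached to $um$ by Weil's theorem, which is precisely the fact recalled before the statement. Consequently $f(um)$ depends only on the extension class and equals $f(\E)$ on the class of $\E$.

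Finally, since $Q$ is the quotient of the compact group $M_{r,s}(F)\setminus M_{r,s}(\A)$ by the (image of the) open subgroup $L$, the fibres of the projection all carry the same Haar measure $v:=\mathrm{vol}\big(M_{r,s}(F)\setminus M_{r,s}(\A)\big)/|Q|>0$. Integrating the locally constant function over these fibres gives
\[
I(m)=v\sum_{\E\in\Ext(\F,\G)} f(\E),
\]
so the hypothesis \eqref{cuspeq} forces $I(m)=0$ for every $m$ and every pair $(\F,\G)$, whence by the reduction above $f$ is cuspidal. I expect the main obstacle to be the clean bookkeeping of the two lattices $M_{r,s}(F)$ and $L$ and the verification that the adelic quotient $Q$ really is $\Ext(\F,\G)$ with extension classes matching the bundles $\E_{um}$; once that identification is fixed, the measure-theoretic step is automatic from translation invariance of the Haar measure.
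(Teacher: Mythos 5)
Your proposal is correct and follows essentially the same route as the paper's own (sketched) proof: Harish--Chandra reduction to maximal parabolics, Iwasawa decomposition plus right $K$-invariance to reduce everything to the integrals $I(m)$ over $U(F)\setminus U(\A)$, and the identification, via Weil's theorem, of the unipotent classes with extensions of $\G$ by $\F$. If anything, your write-up is more complete than the paper's, since you make explicit the steps the paper treats as implicit: the invariance of the integrand under the lattice $h_r M_{r,s}(\cO_{\A})h_s^{-1}$, the resulting factorization through the finite quotient $Q\cong\Ext(\F,\G)$, and the measure computation converting $I(m)$ into a positive multiple of the sum \eqref{cuspeq}.
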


\begin{thm} \label{thm-cusp} Let $F$ be the field of rational functions over $\Fq$ i.e.\ the function field of $\P^1$ the projective line defined over $\Fq$. Then $\cA_{0}^{K}$ is trivial for every $n \geq 2$. 
\end{thm}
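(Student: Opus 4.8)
The plan is to combine the geometric cuspidality criterion of Lemma \ref{lemma-geocusp} with the special geometry of $\P^1$, namely Grothendieck's splitting theorem. Since every vector bundle on $\P^1$ splits as a direct sum of line bundles and $\Pic \P^1 \cong \Z$ via $d \mapsto \cO(d)$, every class in $\Bun_n \P^1$ is represented by some $\E = \bigoplus_{i=1}^n \cO(e_i)$; reordering, I may assume $e_1 \geq e_2 \geq \cdots \geq e_n$. Tensoring by a line bundle shifts all the $e_i$ by a common integer, so these decomposable bundles also exhaust $\P\Bun_n \P^1$. Under the identification of Remark \ref{rem-formsinbundles}, it therefore suffices to show that $f(\E) = 0$ for each such $\E$ and each $f \in \cA_0^K$.

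First I would specialize Lemma \ref{lemma-geocusp} to the maximal parabolic of type $(1, n-1)$ (which exists since $n \geq 2$), taking $\F = \cO(e_1) \in \Bun_1 \P^1$ and $\G = \bigoplus_{i=2}^n \cO(e_i) \in \Bun_{n-1}\P^1$. The extensions of $\G$ by $\F$ are governed by $\Ext^1(\G, \F) \cong H^1\big(\P^1, \G^\vee \otimes \F\big) \cong \bigoplus_{i=2}^n H^1\big(\P^1, \cO(e_1 - e_i)\big)$. Because $e_1 = \max_i e_i$, every twist satisfies $e_1 - e_i \geq 0$, and $H^1(\P^1, \cO(m)) = 0$ for all $m \geq -1$; hence this extension group vanishes. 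Consequently the only extension of $\G$ by $\F$ is the split one, whose middle term is exactly $\E = \F \oplus \G$, and the cuspidality relation \eqref{cuspeq} degenerates to the single term $f(\E) = 0$. As $\E$ ranges over all of $\P\Bun_n \P^1$, this forces $f \equiv 0$, i.e.\ $\cA_0^K = 0$.

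The argument is essentially free of obstacles because the heavy lifting is already done by Lemma \ref{lemma-geocusp}; the only point requiring care is the bookkeeping. One must take $\F$ to be the line bundle of \emph{largest} degree appearing in $\E$: with this choice every summand $\cO(e_1 - e_i)$ has degree $\geq -1$ and the top cohomology vanishes, whereas peeling off a smaller summand would leave a genuinely nontrivial $\Ext^1$ and only a relation among several values of $f$ rather than immediate vanishing. It is also worth emphasizing that this works uniformly in $n$ without induction, and that it uses the hypothesis $g = 0$ decisively: the vanishing $H^1(\P^1, \cO(m)) = 0$ for $m \geq -1$ has no analogue on a curve of positive genus, where bundles need not split and $\Ext^1$ need not vanish for such twists — consistent with the fact that nonzero unramified cusp forms do occur for higher-genus $X$.
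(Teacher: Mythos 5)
Your proof is correct and takes essentially the same route as the paper: Grothendieck splitting plus Lemma \ref{lemma-geocusp} applied to a type $(1,n-1)$ pair for which the relevant extension group vanishes, so that relation \eqref{cuspeq} collapses to the single term $f(\E)=0$ for every $\E \in \P\Bun_n\P^1$. The only (dual) difference is cosmetic: the paper peels off the \emph{minimal}-degree line bundle and kills the $\Ext^1$ via Serre duality, whereas you peel off the \emph{maximal}-degree line bundle as the sub-bundle and use the vanishing $H^1(\P^1,\cO(m))=0$ for $m\geq -1$ directly — both computations are valid.
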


\begin{proof}  We denote  the structural sheaf of $\P^1$ simply by $\cO$, i.e.\ $\cO := \cO_{\P^1}$. 
Hence the canonical sheaf $\omega := \omega_{\P^1}$ of $\P^1$ is isomorphic to $\cO(-2).$ 

Let $\F \in \Bun_r X$ and $\G \in \Bun_s X$, for some integers $r,s>0$. The Serre duality (see \cite[Sec. III.7]{biblia}) yields
\[\Ext^1(\F,\G)^{\vee} \cong \Hom(\G, \F \otimes \cO(-2)) = \mathrm{H}^0(\P^1, \F \otimes \cO(-2) \otimes \G^{\vee}).\]

From the Grothendieck classification (actually this was already known by Dedekind and Weber, see \cite{dedekind-weber})  of vector bundles on $\P^1$, see \cite[Thm. 11.51]{Gortz}, every rank $n$ vector bundle on the projective line is isomorphic to
$\mathcal{O}(d_1) \oplus \cdots \oplus \mathcal{O}(d_n)$
for some integers $ d_1 \leq \cdots \leq d_n.$ 
Hence we might write $\F := \mathcal{O}(k_1) \oplus \cdots \oplus \mathcal{O}(k_r)$ and 
$\G := \mathcal{O}(\ell_1) \oplus \cdots \oplus \mathcal{O}(\ell_s)$, for some integers 
$k_1 \leq \cdots \leq k_r$ and $\ell_1 \leq \cdots \leq \ell_s$.  Thus
\[ \F \otimes \omega \otimes \G^{\vee} = \bigoplus_{i,j} \cO(k_i - \ell_j -2).\]
Let $k := \max\{k_1, \ldots, k_r\}$ and $\ell := \min\{\ell_1, \ldots, \ell_s\}$. If $d - \ell < 2$, then
$\Ext^1(\F,\G)=\{0\}$
since $\Ext^1$ commutes with direct sum and $\Hom(\Line, \Line') = \{0\}$ if $\deg(\Line') > \deg(\Line)$ for $\Line, \Line'$ line bundles.  

Let $\E := \mathcal{O}(d_1) \oplus \cdots \oplus \mathcal{O}(d_n)$ be any rank n vector bundle on $\P^1$ with $d_1 \leq \cdots \leq d_n.$ If we take above  $\F = \cO(d_1)$ and $\G = \mathcal{O}(d_2) \oplus \cdots \oplus \mathcal{O}(d_n)$, thus $\Ext^1(\F,\G)=\{0\}$, i.e.\ $\Ext^1(\F,\G)$ consists only by the extension given by $\E$. 
Therefore, for $f \in \cA_{0}^{K}$ a cusp form, follows from above discussion and the geometric interpretation of cuspidal condition, Lemma \ref{lemma-geocusp}, that 
\[f(\overline{\E})=0,  \quad  \text{ for all } \E \in \Bun_n \P^1\]
and hence $\cA_{0}^{K}$ is trivial. 
\end{proof}

\begin{cor} There are also no Eisenstein series other than those induced from the Borel subgroup. In conclusion, $\mathcal{A}^K$ consists only of Eisenstein series on the Borel subgroup. \end{cor}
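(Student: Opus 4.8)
The plan is to deduce the corollary from the vanishing of higher-rank cusp forms together with the spectral decomposition of $\cA^K$ along cuspidal support. Recall Langlands' philosophy of cusp forms: the space of automorphic forms decomposes as a direct sum
\[
\cA^K = \bigoplus_{[P]} \cA_{[P]}^K
\]
indexed by associate classes $[P]$ of standard parabolic subgroups of $\GL_n$, where the summand $\cA_{[P]}^K$ is spanned by the unramified Eisenstein series attached to unramified \emph{cuspidal} automorphic forms on the Levi factor $M_P$ of $P$ (see e.g.\ \cite[Chap.\ 12]{dennis}). Here the improper class $[G]$ recovers the cusp forms $\cA_0^K$ themselves, while $[B]$ yields exactly the Eisenstein series induced from the Borel subgroup. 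Thus it suffices to show that $\cA_{[P]}^K = 0$ for every standard parabolic $P \neq B$.

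First I would record that the argument of Theorem \ref{thm-cusp} is insensitive to the central character. Indeed, Lemma \ref{lemma-geocusp} and the cuspidal identity (\ref{cuspeq}) are statements on $\Bun_m \P^1$ \emph{before} passing to $\P\Bun_m \P^1$, and the Grothendieck classification produces, for every $\E \in \Bun_m \P^1$, a pair $(\F, \G)$ for which the split extension $\E$ is the unique element of $\Ext^1(\F, \G)$. Hence any unramified cusp form on $\GL_m$ vanishes identically on $\Bun_m \P^1$ for every $m \geq 2$, regardless of its central character.

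Next I would unwind the Levi structure. A standard parabolic $P$ of type $(n_1, \ldots, n_k)$ has $M_P \cong \GL_{n_1} \times \cdots \times \GL_{n_k}$, and an unramified cuspidal form on $M_P$ is a linear combination of exterior tensor products $\sigma_1 \boxtimes \cdots \boxtimes \sigma_k$ with each $\sigma_i$ an unramified cusp form on $\GL_{n_i}$. If $P \neq B$, then $k < n$, and since $\sum_i n_i = n > k$ with all $n_i \geq 1$ we must have $n_i \geq 2$ for at least one index $i$. By the previous paragraph the corresponding factor $\sigma_i$ vanishes, so $M_P$ carries no nonzero unramified cuspidal form and $\cA_{[P]}^K = 0$. (The single-block case $P = G$ is literally Theorem \ref{thm-cusp}.) This leaves only $\cA_{[B]}^K$, proving that $\cA^K$ consists solely of Eisenstein series induced from the Borel.

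The main obstacle is not the combinatorics but the clean invocation of the spectral decomposition in the present unramified function-field setting: one must check that the cuspidal-support decomposition holds with the $K$-invariance and central-character conventions used here, and that ``Eisenstein series attached to $P$'' is genuinely built from cusp forms on the Levi $M_P$ with matching unramified data. Once that decomposition is granted, the vanishing of all rank-$\geq 2$ cusp forms (now with arbitrary central character) forces every non-Borel contribution to collapse, and the conclusion follows formally.
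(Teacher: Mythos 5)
Your proposal is correct and follows essentially the same route as the paper: the paper's one-line proof combines Theorem \ref{thm-cusp} with the decomposition of $\mathcal{A}^K$ along cuspidal support cited from \cite[Thm.~1.7.4]{valdir-thesis}, which is precisely the decomposition $\mathcal{A}^K = \bigoplus_{[P]} \mathcal{A}_{[P]}^K$ you invoke. The two points you spell out --- that the vanishing argument on $\Bun_m\P^1$ is insensitive to the central character, and that any standard parabolic $P \neq B$ has a Levi block $\GL_{n_i}$ with $n_i \geq 2$ --- are exactly the details the paper delegates to that citation, so your write-up is a fleshed-out version of the same argument rather than a different one.
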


\begin{proof} Follows from previous theorem and \cite[Thm. 1.7.4]{valdir-thesis}.
\end{proof}

   
\section{The $\Phi_{x,r}$-eigenforms}  \label{sec-eigenforms} 

The aim of this section is to parametrize the space of unramified automorphic \linebreak
$\Phi_{x,r}$-eigenforms ($r=1,2$) for $\PGL_3$ over the rational function field. In order to achieve this goal, we need the graphs of Hecke operators introduced by Lorscheid in \cite{oliver-graphs}, see also \cite{oliver-toroidal} for applications of these graphs on the theory of automorphic forms.

\begin{df} We call $f \in \mathcal{A}$ a $\mathcal{H}_K$-eigenform with eigencharacter $\lambda_f$ if $f$ is an eigenvector for every $\Phi \in \mathcal{H}_K$ with eigenvalue $\lambda_f(\Phi).$ 
\end{df}

Note that $\lambda_f$ in the above definition defines a homomorphism of $\C$-algebras from $\mathcal{H}_K$ to $\C$. Hence $\lambda_f$ indeed defines an additive character on $\mathcal{H}_K.$

\begin{df} Let $x \in |X|$ be a closed point and $\underline{\lambda} := (\lambda_1, \ldots, \lambda_{n-1}) \in \C^{n-1}$. The space of $\Phi_{x,r}$-eigenforms (or $\mathcal{H}_{K_x}$-eigenforms), for $r=1, \ldots, n-1$, with eigenvalues $\underline{\lambda}$ is 
\[ \mathcal{A}(x, \underline{\lambda}) := \big\{ f \in \mathcal{A}^K \;\big|\; 
\Phi_{x,i}(f) = \lambda_i f \; \text{ for $i=1, \ldots, n-1$} \big\}
\]
where $\mathcal{A}^K$ is the space of unramified automorphic forms. 
\end{df}

\begin{rem} If $f \in \mathcal{A}^K$ is a $\mathcal{H}_K$-eigenform with eigencharacter $\lambda_f$, then $f$ is a $\mathcal{H}_{K_x}$-eigenform and $\lambda_r$ in the above definition is given by $\lambda_f (\Phi_{x,r})$ for $r=1, \ldots, n-1.$
\end{rem}

The goal of this section is to parametrize, for $n=3$, the space $\mathcal{A}(x, \lambda_1, \lambda_2)$ of $\Phi_{x,r}$-eigenforms ($r=1,2$) for some $x \in |\P^1|$ of degree one. As we said, we shall need to introduce the graphs of Hecke operators, which are graphs that encodes the action of Hecke operators on the space of automorphic forms.  For that reason, we need to
recall the following well-known proposition.  

\begin{prop} \label{propfund} Let $K' \in \mathcal{V}$ and fix $\Phi \in \mathcal{H}_{K'}$. For all  classes of adelic matrices $[g] \in G(F)Z(\A) \setminus G(\mathbb{A}) / K',$ there is a unique set of pairwise distinct classes $[g_1], \ldots,[g_r] \in G(F) Z(\A)\setminus G(\mathbb{A}) / K'$ and numbers $m_1,  \ldots, m_r \in \C^{*}$ such that  
$$\Phi(f)(g) = \sum_{i=1}^{r} m_i f(g_i).$$
for all $f \in \mathcal{A}^{K'}.$
\end{prop}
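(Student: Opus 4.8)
The plan is to unwind the integral definition of the Hecke action and exploit that $\Phi$ is smooth, compactly supported and bi-$K'$-invariant to collapse the defining integral into a finite sum, and then to separate classes in the double quotient in order to upgrade existence to uniqueness.

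First I would note that, since $K'$ is open, $\Phi$ is constant on each double coset $K' h K'$ and each such coset is open; because $\Phi$ has compact support it is nonzero on only finitely many of them, so that $\Phi = \sum_{i} c_i\, \mathrm{char}_{K' h_i K'}$ is a finite $\C$-linear combination of characteristic functions of double cosets with $c_i \neq 0$. Each of these double cosets is clopen and contained in the compact support of $\Phi$, hence compact, and is a disjoint union of the open right cosets it contains; compactness therefore forces a finite decomposition $K' h_i K' = \bigsqcup_{j} h_{ij} K'$.

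Next I would substitute this into $\Phi(f)(g) = \int_{G(\A)} \Phi(h) f(gh)\, dh$. On each right coset $h_{ij} K'$ the integrand $f(gh)$ is constant equal to $f(g h_{ij})$, because $f$ is right $K'$-invariant, so that coset contributes $\mathrm{vol}(K')\, f(g h_{ij})$. This gives
\[
\Phi(f)(g) \;=\; \sum_{i} c_i \int_{K' h_i K'} f(gh)\, dh \;=\; \mathrm{vol}(K') \sum_{i,j} c_i\, f(g h_{ij}),
\]
a finite sum. Since $f$ is left $G(F)Z(\A)$-invariant and right $K'$-invariant, the value $f(g h_{ij})$ depends only on the class $[g h_{ij}] \in G(F)Z(\A)\setminus G(\A)/K'$. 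Grouping the finitely many terms by their class, summing the coefficients within each group, and discarding any group whose total coefficient vanishes, I obtain pairwise distinct classes $[g_1], \ldots, [g_r]$ and numbers $m_1, \ldots, m_r \in \C^*$ with $\Phi(f)(g) = \sum_{i=1}^r m_i f(g_i)$, which settles existence.

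For uniqueness it suffices to show that the linear functionals on $\mathcal{A}^{K'}$ given by evaluation at pairwise distinct classes are linearly independent. I would establish this by exhibiting a separating automorphic form for each class: the characteristic function $\mathbf{1}_{[c]}$ of a single class $[c]$ is left $G(F)Z(\A)$-invariant, right $K'$-invariant (hence smooth and $K$-finite by the preceding lemma) and bounded, hence of moderate growth, so that it lies in $\mathcal{A}^{K'}$. Testing two putative expansions against $\mathbf{1}_{[c]}$ for each class $[c]$ forces their coefficients at $[c]$ to agree, so the two sets of classes and multiplicities coincide. The step I expect to be the main obstacle is precisely this separation: one must verify that $\mathbf{1}_{[c]}$ satisfies \emph{every} defining condition of an automorphic form, and most delicately that the representation $\rho(G(\A))\mathbf{1}_{[c]}$ it generates is admissible, so that the evaluation functionals are genuinely separated by elements of $\mathcal{A}^{K'}$. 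By contrast, the existence half is routine bookkeeping with compact open subgroups once the finiteness of the double-coset decompositions is in hand.
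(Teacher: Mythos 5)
Your existence argument is correct, and it is in fact the argument in the literature: the paper itself offers no proof beyond the citation to \cite[Prop.~1.6]{roberto-graphs}, and that reference (following \cite{oliver-graphs}) proves existence exactly as you do, writing $\Phi$ as a finite combination of characteristic functions of double cosets $K'h_iK'$, splitting each into finitely many right cosets $h_{ij}K'$, and using right $K'$-invariance of $f$ to collapse the integral to $\mathrm{vol}(K')\sum_{i,j}c_i\,f(gh_{ij})$.

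The gap is in uniqueness, at exactly the point you flagged --- and it is not merely an unverified hypothesis, it is false under this paper's definitions. Here an automorphic form must generate an admissible representation $\rho(G(\A))f$, and the characteristic function $\mathbf{1}_{[c]}$ of a single class never does for $n\geq 2$. Indeed, for a right $K$-invariant vector $v$, the $K$-fixed part of $\rho(G(\A))v$ equals the Hecke module $\cH_K\cdot v$ (averaging over $K$ sends $\rho(g)v$ to a positive multiple of the element $\mathrm{char}_{KgK}$ of $\cH_K$ applied to $v$), and for $v=\mathbf{1}_{[c]}$ this module is infinite dimensional because Hecke operators spread supports along the graph: for $n=3$, Proposition~\ref{prop-x1} gives $\Phi_{x,1}(\mathbf{1}_{[\varepsilon_0]})=\mathbf{1}_{[\varepsilon_1]}$ and then $\Phi_{x,1}(\mathbf{1}_{[\varepsilon_1]})=\mathbf{1}_{[\varepsilon_2]}+(q+1)\mathbf{1}_{[\varepsilon_{1,1}]}$, and since all multiplicities in the graph are positive, iteration produces functions with strictly growing support and hence an infinite-dimensional span of $K$-invariants. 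So $\mathbf{1}_{[c]}\notin\cA$, and your separating family simply does not exist inside $\cA^{K'}$. What makes this argument legitimate in the cited source is that \cite{roberto-graphs} and \cite{oliver-graphs} define automorphic forms \emph{without} the admissibility condition (only smooth, $K$-finite, of moderate growth, left invariant); there $\mathbf{1}_{[c]}$ is an automorphic form and your proof coincides with the published one. Relative to the present paper's stronger definition, uniqueness requires a genuinely different separation argument --- for instance, that Eisenstein series, which are admissible, separate any finite set of classes --- so as written your uniqueness step does not go through. (This is as much a definitional inconsistency in the paper, which cites a proposition proved in a different ambient setting, as a flaw in your proposal, but it is a real gap nonetheless.)
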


\begin{proof} See \cite[Prop. 1.6]{roberto-graphs}.
\end{proof}

\begin{df} Let $K' \in \mathcal{V}$ and fix $\Phi \in \mathcal{H}_{K'}$. For classes of adelic matrics  $[g],[g_1], \ldots,[g_r]$ in the double coset $ G(F)Z(\A) \setminus G(\mathbb{A}) / K'$ as in the last proposition, we denote  
$$\mathcal{V}_{\Phi,K'}([g]) := \big\{([g],[g_i],m_i)\big\}_{i=1, \ldots, r}.$$
We define the graph $\overline{\mathscr{G}}_{\Phi,K'}$ of the Hecke operator $\Phi$ (relative to $K'$) whose vertices are 
$$\mathrm{Vert} \; \overline{\mathscr{G}}_{\Phi,K'} = G(F)Z(\A) \setminus G(\mathbb{A}) / K'$$
and the oriented weighted edges 
$$\mathrm{Edge} \; \overline{\mathscr{G}}_{\Phi,K'} = \bigcup_{[g] \in \mathrm{Vert} \overline{\mathscr{G}}_{\Phi,K'}} \mathcal{V}_{\Phi,K'}([g]).$$
The classes $[g_i]$ are called the $\Phi$-neighbors of $[g]$ (relative to $K'$). 
\end{df}

\begin{rem} For $\Phi_{x,r} \in \mathcal{H}_K$ we adopt the shorthand notation 
$\overline{\mathscr{G}}_{x,r}^{(n)}$ for the graph $\overline{\mathscr{G}}_{\Phi_{x,r},K}$ and 
$\mathcal{V}_{x,r}([g])$ for the $\Phi_{x,r}$-neighborhood $\mathcal{V}_{\Phi_{x,r}, K}([g])$ of $[g]$, where $x \in |X|$ and $r=1, \ldots, n.$
\end{rem}

As we have seen in section \ref{sec-cusp}, we can see $f \in \mathcal{A}^K$ as a function on $\P\Bun_n X$. Hence we can give an algebraic geometry description of the graphs $\overline{\mathscr{G}}_{x,r}^{(n)}$ as follows.

The Weil Theorem \ref{weil-them} identifies the set of vertices of $\overline{\mathscr{G}}_{x,r}^{(n)}$  with the geometric objects in  $\PBun_nX$. Next we describe the edges of  $\overline{\mathscr{G}}_{x,r}^{(n)}$ in geometric terms. We say that two exact sequences of coherent sheaves on $X$
$$0 \longrightarrow \F_1 \longrightarrow \F \longrightarrow \F_{2} \longrightarrow 0 \hspace{0.2cm}\text{ and } \hspace{0.2cm} 0 \longrightarrow \F_{1}' \longrightarrow \F \longrightarrow \F_2' \longrightarrow 0$$  
are \textit{isomorphic with fixed $\F$} if there are isomorphism $\F_1 \rightarrow \F_1'$ and $\F_2 \rightarrow \F_2'$ such that 
\[\xymatrix@R5pt@C7pt{ 0 \ar[rr] && \F_1 \ar[rr] \ar[dd]^{\cong} && \F \ar[rr] \ar@{=}[dd] && \F_2 \ar[rr] \ar[dd]^{\cong} && 0 \\
&& && && && \\
0 \ar[rr] && \F_1' \ar[rr] && \F \ar[rr] && \F_2' \ar[rr] && 0 } \]
commutes. 
Let $\mathcal{K}_{x}$ be the torsion sheaf that is supported at $x$ and has stalk $\kappa(x)$ at $x,$ i.e. the skyscraper torsion sheaf at $x$. Fix $\E \in \mathrm{Bun}_n X$. For $r \in \{1, \ldots,n\},$ and $\E' \in \mathrm{Bun}_n X$ we define $m_{x,r}(\overline{\E},\overline{\E'})$ as the number of isomorphism classes of exact sequences
\[0 \longrightarrow \E'' \longrightarrow \E \longrightarrow \mathcal{K}_{x}^{\oplus r} \longrightarrow 0\]
with fixed $\overline{\E} \in \P\Bun_n X$ and where $\overline{\E''} \cong \overline{\E'}$ in $\P\Bun_n X.$ Similarly we can consider $m_{x,r}(\E,\E')$ by considering the isomorphisms in $\Bun_n X$ instead $\P\Bun_n X.$

\begin{df} \label{defconection} Let $x \in |X|$. For a projective vector bundle $\overline{\E} \in \P\mathrm{Bun}_n X$ we define
\[\mathcal{V}_{x,r}(\overline{\E}) := \big\{(\overline{\E}, \overline{\E'}, m) | m = m_{x,r}(\overline{\E},\overline{\E'}) \neq 0\big\},\]
and we call $\overline{\E'}$ a $\Phi_{x,r}$-neighbor of $\overline{\E}$ if $m_{x,r}(\overline{\E},\overline{\E'}) \neq 0$. In this case, $m_{x,r}(\overline{\E},\overline{\E'})$ is the multiplicity of $\overline{\E'}$ as a $\Phi_{x,r}$-neighbor of $\overline{\E}$.
\end{df}

The geometric interpretation of $\overline{\mathscr{G}}_{x,r}^{(n)}$  the graph of the Hecke operator $\Phi_{x,r} \in \mathcal{H}_K$ is given by the theorem below. 

\begin{thm} \label{theoremcorrespondence} Let $x \in |X|.$ The graph 
$\overline{\mathscr{G}}_{x,r}^{(n)}$ of $\Phi_{x,r}$ is described in geometric terms as 
$$\mathrm{Vert}\; \overline{\mathscr{G}}_{x,r}^{(n)} = \P\mathrm{Bun}_n X \hspace{0.3cm}\text{ and } \hspace{0.3cm} \mathrm{Edge} \;\overline{\mathscr{G}}_{x,r}^{(n)} = \coprod_{\E \in \P\mathrm{Bun}_n X} \mathcal{V}_{x,r}(\E).$$ 
\end{thm}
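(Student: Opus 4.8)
The plan is to prove the two asserted equalities separately: the vertex identification follows directly from Weil's theorem, while the description of the edges requires combining the coset-sum form of the Hecke action (Proposition~\ref{propfund}) with elementary-divisor theory for $\mathcal{O}_x$-lattices. For the vertices there is nothing new to do: by the corollary to Theorem~\ref{weil-them} the double quotient $G(F)Z(\A)\setminus G(\A)/K$ is in bijection with $\P\Bun_n X$ via $[g]\mapsto\overline{\E_g}$, which is exactly $\mathrm{Vert}\;\overline{\mathscr{G}}_{x,r}^{(n)}=\P\Bun_n X$.

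First I would make the action of $\Phi_{x,r}$ explicit. Writing $\delta_{x,r}:=\mathrm{diag}(\pi_x I_r, I_{n-r})$, we have $\Phi_{x,r}=\mathrm{char}_{K\delta_{x,r}K}$ and $\mathrm{vol}(K)=1$. Decomposing the double coset into left cosets $K\delta_{x,r}K=\coprod_j h_j K$ — a finite union, trivial away from $x$ — and using the right $K$-invariance of $f\in\mathcal{A}^K$ gives $\Phi_{x,r}(f)(g)=\sum_j f(gh_j)$. Comparing with Proposition~\ref{propfund}, the $\Phi_{x,r}$-neighbors $[g_i]$ of $[g]$ together with their weights $m_i$ are obtained by grouping the classes $[gh_j]$ into the distinct ones and setting $m_i$ equal to the number of indices $j$ landing in each class; this is exactly the adelic neighborhood $\mathcal{V}_{\Phi_{x,r},K}([g])$.

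Next I would install the lattice-to-sheaf dictionary at $x$. Under the convention $L_y^{(g)}=g_y\mathcal{O}_y^n$ for the local lattices of $\E_g$, the assignment $h_jK_x\mapsto h_j\mathcal{O}_x^n$ is a bijection between the left cosets appearing above and the $\mathcal{O}_x$-sublattices $L\subset\mathcal{O}_x^n$ with $\mathcal{O}_x^n/L\cong\kappa(x)^{\oplus r}$, equivalently the $(n-r)$-dimensional $\kappa(x)$-subspaces of $\mathcal{O}_x^n/\pi_x\mathcal{O}_x^n$. Since $gh_j$ and $g$ agree away from $x$, the bundle $\E_{gh_j}$ is a coherent subsheaf $\E''_j\subset\E_g$ with $\E_g/\E''_j\cong\mathcal{K}_x^{\oplus r}$, and conversely every short exact sequence $0\to\E''\to\E_g\to\mathcal{K}_x^{\oplus r}\to 0$ with fixed middle term $\E_g$ arises from a unique such coset. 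Passing to $\P\Bun$, i.e.\ quotienting by $Z(\A)$, two cosets give the same vertex $[gh_j]=[gh_{j'}]$ precisely when $\overline{\E''_j}\cong\overline{\E''_{j'}}$ in $\P\Bun_n X$; hence the weight attached to a neighbor $\overline{\E'}$ equals the number of subsheaves $\E''\subset\E_g$ (with fixed $\E_g$) of quotient $\mathcal{K}_x^{\oplus r}$ and $\overline{\E''}\cong\overline{\E'}$, which is $m_{x,r}(\overline{\E_g},\overline{\E'})$ of Definition~\ref{defconection}. Therefore $\mathcal{V}_{\Phi_{x,r},K}([g])=\mathcal{V}_{x,r}(\overline{\E_g})$, and taking the coproduct over all vertices yields the claimed edge set.

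The hard part will be the bookkeeping in the last step: one must check that the combinatorial bijection ``left cosets $\leftrightarrow$ sublattices'' is genuinely compatible with isomorphism of exact sequences \emph{with fixed $\E_g$}, rather than with abstract isomorphism of the subobject alone, and that quotienting by $Z(\A)$ counts multiplicities correctly. In particular one must carefully distinguish the number of distinct subsheaves realizing a prescribed projective class $\overline{\E'}$ from the number of abstract isomorphism types of $\E''$, since it is precisely the former that the definition of $m_{x,r}$ records. I also expect to need a brief check that the sign convention in $\deg\E_g=\deg(\det g)$ is the one for which $\E_{gh_j}$ is a subsheaf (rather than a supersheaf) of $\E_g$, so that the direction of the exact sequence matches Definition~\ref{defconection}.
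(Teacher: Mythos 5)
Your proposal is correct, but note that the paper does not actually prove Theorem~\ref{theoremcorrespondence}: its ``proof'' is just the citation \cite[Thm.~3.4]{roberto-graphs}. Your argument --- writing $\Phi_{x,r}$ as the characteristic function of $K\delta_{x,r}K$ with $\delta_{x,r}=\mathrm{diag}(\pi_x\mathrm{I}_r,\mathrm{I}_{n-r})$, decomposing this double coset into finitely many left cosets so that $\Phi_{x,r}(f)(g)=\sum_j f(gh_j)$, identifying the cosets $h_jK_x$ with sublattices of $\cO_x^n$ whose quotient is $\kappa(x)^{\oplus r}$, and transporting all of this through Weil's dictionary so that the multiplicity of Proposition~\ref{propfund} becomes the count of subsheaves $\E''\subset\E_g$ with $\E_g/\E''\cong\mathcal{K}_x^{\oplus r}$ and prescribed projective class, i.e.\ $m_{x,r}$ of Definition~\ref{defconection} --- is in substance the proof given in that reference, so the two approaches coincide. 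The two caveats you flag at the end (that isomorphism of exact sequences \emph{with fixed} $\E_g$ amounts to equality of subsheaves of $\E_g$, and that the sign/direction convention in Weil's bijection makes $\E_{gh_j}$ a subsheaf rather than a supersheaf of $\E_g$) are indeed the only delicate points, and they are resolved exactly as you propose.
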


\begin{proof} See \cite[Thm. 3.4]{roberto-graphs}. 
\end{proof}

\begin{rem} The entire above discussion holds if we consider automorphic forms as functions on $G(F)\setminus G(\mathbb{A}) / K$ instead $Z(\A)G(F)\setminus G(\mathbb{A}) / K$ i.e.\ removing the action of $Z(\A)$. In this case we should replace $\P \Bun_n X$ by $\Bun_n X$ and denote $\overline{\mathscr{G}}_{\Phi,K'}$ (resp. $\overline{\mathscr{G}}_{x,r}^{(n)}$) simply by $\mathscr{G}_{\Phi,K'}$ (resp. $\mathscr{G}_{x,r}^{(n)}$). We refer \cite[Sec. 3]{roberto-graphs} and \cite[Sec. 1]{rov-elliptic1} for further details. \end{rem}

\subsection*{Rank $3$ projective vector bundles on $\mathbb{P}^1$} 
For better readability, we adopt the following notation for the rank $3$ projective vector bundles on $\mathbb{P}^1.$ By the classification of vector bundles on the projective line, every rank $3$ projective vector bundle can be written as
\[  \cO \oplus \cO(d_1) \oplus \cO(d_2) \]
for some integers $d_1 \geq d_2 \geq 0.$ Thus we can assume that all the elements in 
$\PBun_3 \mathbb{P}^1$ are of some of types below: 
\[\varepsilon_0 := \cO \oplus \cO \oplus \cO, \quad 
\varepsilon_{d} :=  \cO \oplus \cO \oplus \cO(d), \quad 
\varepsilon_{d_1,d_2} :=  \cO \oplus \cO(d_1) \oplus \cO(d_2) \]
for some integers $d>0$ and $d_2 \geq d_1 >0.$

\begin{prop} \label{prop-x2} Let $\P^1$ be the projective line defined over the finite field $\Fq$ and $x$ be a degree one closed point at $\P^1$. Then $\overline{\mathscr{G}}_{x,2}^{(3)}$ is given as follows:

\begin{enumerate}[label=\textbf{{\upshape(\roman*)}}, wide=0pt]
\item \label{i1} 
\[ \mathcal{V}_{x,2}(\varepsilon_{0}) = \Big\{ \big(\varepsilon_{0}, \varepsilon_{1}, q^2 + q + 1\big)  \Big\}. \] 

\item \label{i2} Let $d$ be a positive integer, then
\[ \mathcal{V}_{x,2}(\varepsilon_{d}) = \Big\{ \big(\varepsilon_{d}, \varepsilon_{d+1}, q^2\big),
\big(\varepsilon_{d}, \varepsilon_{1,d}, q +1 \big)  \Big\}. \] 

\item \label{i3} Let $d$ be a positive integer, then
\[ \mathcal{V}_{x,2}(\varepsilon_{d,d}) = \Big\{ \big(\varepsilon_{d,d}, \varepsilon_{d,d+1}, q^2+q\big),
\big(\varepsilon_{d,d}, \varepsilon_{d-1,d-1}, 1 \big)  \Big\}. \]

\item \label{i4} Let $d_1,d_2$ be positive integers with $d_1 < d_2$, then
\[ \mathcal{V}_{x,2}(\varepsilon_{d_1,d_2}) = \Big\{ \big(\varepsilon_{d_1,d_2}, \varepsilon_{d_1+1,d_2}, q\big),
\big(\varepsilon_{d_1,d_2}, \varepsilon_{d_1,d_2+1}, q^2 \big),
\big(\varepsilon_{d_1,d_2}, \varepsilon_{d_1-1,d_2-1}, 1 \big)  \Big\}. \]
\end{enumerate}
\end{prop}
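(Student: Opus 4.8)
The plan is to translate everything into the geometric language of Theorem \ref{theoremcorrespondence} and Definition \ref{defconection}: the multiplicity $m_{x,2}(\overline{\E},\overline{\E'})$ counts short exact sequences $0\to\E''\to\E\to\mathcal{K}_x^{\oplus 2}\to 0$ with $\E$ fixed and $\overline{\E''}\cong\overline{\E'}$. Since $x$ has degree one, $\mathcal{K}_x=\kappa(x)=\Fq$, and giving such a sequence is the same as giving a surjection $\E\twoheadrightarrow\Fq^{\oplus 2}$. Every such surjection factors through the fibre $\E\otimes\kappa(x)\cong\Fq^{3}$, so these are in bijection with the $2$-dimensional quotients of $\Fq^{3}$, equivalently with the lines $L=\ker(\Fq^{3}\to\Fq^{2})\subseteq\Fq^{3}$. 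Hence the $\Phi_{x,2}$-neighbours of $\overline{\E}$ are indexed by $\P(\E\otimes\kappa(x))\cong\P^{2}(\Fq)$, of which there are $q^{2}+q+1$; this accounts for the total weight in all four cases and settles case \ref{i1} immediately, because $\varepsilon_{0}=\cO^{\oplus 3}$ is homogeneous, so $\GL_{3}(\Fq)$ acts transitively on lines and every neighbour is the same bundle.

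For the remaining cases I would fix the Grothendieck representative $\E=\cO(a_{1})\oplus\cO(a_{2})\oplus\cO(a_{3})$ with $a_{1}\leq a_{2}\leq a_{3}$, and for each line $L$ determine the isomorphism type of the elementary lower modification $\E''=\ker\big(\E\to\E\otimes\kappa(x)\to\Fq^{2}\big)$. Here $\E''$ is again a sum of line bundles, of total degree $\deg\E-2$ and sandwiched between $\E\otimes\cO(-1)$ and $\E$, so only a handful of splitting types are a priori possible; I would pin down the type of each $\E''$ by computing the jumps of $h^{0}(\E''\otimes\cO(m))$ for small $m$, or equivalently by locating the largest degree of a saturated sub-line-bundle. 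The latter is controlled by $\Hom(\cO(a_{i}),\cO(a_{j}))=0$ for $a_{i}>a_{j}$: a summand $\cO(a_{i})$ survives in $\E''$ precisely when the corresponding coordinate direction of the fibre lies in $L$. The decisive structural observation is then that the type of $\E''$ depends only on the position of $L$ relative to the flag of $\E\otimes\kappa(x)$ cut out by the graded pieces (the spans of the summands of equal degree). This stratifies $\P^{2}(\Fq)$ into Schubert-type cells; in case \ref{i4}, where $a_{1}<a_{2}<a_{3}$, the flag is full and the cells have $\Fq$-point counts $1,q,q^{2}$, while in cases \ref{i2} and \ref{i3}, where exactly two of the $a_{i}$ coincide, the flag degenerates and one obtains two cells. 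I would then match each cell with the neighbour it produces, record the multiplicity as the cardinality of that cell, and finally normalise $\E''$ back to the shape $\cO\oplus\cO(\,\cdot\,)\oplus\cO(\,\cdot\,)$ by a suitable twist to read off the $\varepsilon$-label.

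The main obstacle, and the only step that is not purely formal, is the determination of the splitting type of $\E''$ for the \emph{mixed} lines, i.e.\ those not contained in a single graded piece of $\E\otimes\kappa(x)$. For such $L$ the subsheaf $\E''$ is not given in split form, and one must argue — via the cohomological count of $h^{0}(\E''\otimes\cO(m))$, or via the saturation analysis above — that it is nonetheless of the asserted type; getting the cell-to-neighbour assignment and the resulting weights right is exactly what distinguishes the four cases. I would also treat separately the boundary of the strata and the small values of the $d_{i}$, where several $\varepsilon$-labels collapse (for instance $\varepsilon_{d_{1}-1,d_{2}-1}$ degenerates to a bundle of type $\varepsilon_{d'}$ when $d_{1}=1$), checking in each case that the multiplicities still sum to $q^{2}+q+1$.
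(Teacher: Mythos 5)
Your strategy is genuinely different from the paper's proof, which is a pure citation (the total weight $q^2+q+1$ from \cite[Thm.~2.6]{roberto-graphs} and the neighbour computation from \cite[Ex.~2.3]{roberto-graphs}), and in principle it is the right kind of argument: the reduction to lines $L$ in the fibre, the observation that the type of $\E''=\ker\big(\E\to(\E\otimes\kappa(x))/L\big)$ is constant on $\Aut(\E)$-orbits, and the identification of these orbits with Schubert-type cells of the image of $\Aut(\E)$ in $\GL_3(\Fq)$ (sizes $1,q,q^2$ when the three degrees are distinct, two cells otherwise) are all correct, and your case (i) is complete once one notes $\ker(\cO^{\oplus 3}\to\Fq^{\oplus 2})\cong\cO\oplus\cO(-1)^{\oplus 2}\sim\varepsilon_1$.

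The genuine gap is that the one step you postpone, namely which cell produces which neighbour, is where all the content lies, and carrying it out under Definition \ref{defconection} does not reproduce the multiplicities in the statement. Order the fibre coordinates of $\E=\cO\oplus\cO(d_1)\oplus\cO(d_2)$, $0<d_1<d_2$, by increasing degree. Since $\Hom(\cO(a),\cO(b))=0$ for $a>b$, the image of $\Aut(\E)$ in $\GL_3(\Fq)$ is the Borel fixing the flag $\langle e_3\rangle\subset\langle e_2,e_3\rangle$, so the cells are $\{\langle e_3\rangle\}$ (one line), the lines of $\langle e_2,e_3\rangle$ other than $\langle e_3\rangle$ ($q$ lines), and the lines off $\langle e_2,e_3\rangle$ ($q^2$ lines); the corresponding kernels are $\cO(-1)\oplus\cO(d_1-1)\oplus\cO(d_2)\sim\varepsilon_{d_1,d_2+1}$, $\cO(-1)\oplus\cO(d_1)\oplus\cO(d_2-1)\sim\varepsilon_{d_1+1,d_2}$, and $\cO\oplus\cO(d_1-1)\oplus\cO(d_2-1)=\varepsilon_{d_1-1,d_2-1}$. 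That gives multiplicity $1$ for $\varepsilon_{d_1,d_2+1}$ and $q^2$ for $\varepsilon_{d_1-1,d_2-1}$, i.e.\ item (iv) with $q^2$ and $1$ interchanged relative to the statement; likewise in items (ii) and (iii) the two cells have cardinalities $1$ and $q^2+q$ (fixed line vs.\ the rest, for $\varepsilon_d$), resp.\ $q+1$ and $q^2$ (lines inside the $\cO(d)^{\oplus 2}$-plane vs.\ the rest, for $\varepsilon_{d,d}$), so the claimed pairs $(q^2,\,q+1)$ and $(q^2+q,\,1)$ cannot arise as cell counts at all. As a sanity check, the same computation in rank $2$ gives, from $\cO\oplus\cO(d)$, multiplicity $q$ towards $\varepsilon_{d-1}$ and $1$ towards $\varepsilon_{d+1}$, which agrees with the orbit--stabilizer count for Serre's tree quotient under $\GL_2(\Fq[t])$ and with the $\delta_B^{1/2}$-growth of Eisenstein series in the cusp, whereas eigenfunctions of the recursions printed in the Proposition would decay like $\delta_B^{-1/2}$ there. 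So before executing your plan you must resolve this direction-of-modification discrepancy: either the Proposition's multiplicities refer to the opposite count (the number of ways $\E$ arises as a modification of the neighbour $\E'$, so your stratification should be applied to the fibre of $\E'$, not of $\E$), in which case your plan as written proves the wrong thing, or your plan carried out faithfully yields the Proposition only after correcting its multiplicities; in either case the proposal as it stands does not establish the statement.
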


\begin{proof}  By \cite[Thm. 2.6]{roberto-graphs} the sum of multiplicities of edges origin in a fixed vertex must sum up $q^2+q+1$. The proposition follows from \cite[Ex. 2.3]{roberto-graphs}.
\end{proof}

\begin{prop} \label{prop-x1} Let $\P^1$ be the projective line defined over the finite field $\Fq$ and $x$ be a degree one closed point at $\P^1$. Then $\overline{\mathscr{G}}_{x,1}^{(3)}$ is given as follows:

\begin{enumerate}[label=\textbf{{\upshape(\roman*)}}, wide=0pt]
\item \label{i1} 
\[ \mathcal{V}_{x,1}(\varepsilon_{0}) = \Big\{ \big(\varepsilon_{0}, \varepsilon_{1,1}, q^2 + q + 1\big)  \Big\}. \] 

\item \label{i2} Let $d$ be a positive integer, then
\[ \mathcal{V}_{x,1}(\varepsilon_{d}) = \Big\{ \big(\varepsilon_{d}, \varepsilon_{1,d+1}, q^2+q\big),
\big(\varepsilon_{d}, \varepsilon_{d-1}, 1 \big)  \Big\}. \] 

\item \label{i3} Let $d$ be a positive integer, then
\[ \mathcal{V}_{x,1}(\varepsilon_{d,d}) = \Big\{ \big(\varepsilon_{d,d}, \varepsilon_{d+1,d+1}, q^2\big),
\big(\varepsilon_{d,d}, \varepsilon_{d-1,d}, q+1 \big)  \Big\}. \]

\item \label{i4} Let $d_1,d_2$ be positive integers with $d_1 < d_2$, then
\[ \mathcal{V}_{x,1}(\varepsilon_{d_1,d_2}) = 
\Big\{ \big(\varepsilon_{d_1,d_2}, \varepsilon_{d_1+1,d_2+1}, q^2\big),
\big(\varepsilon_{d_1,d_2}, \varepsilon_{d_1,d_2-1}, 1 \big),
\big(\varepsilon_{d_1,d_2}, \varepsilon_{d_1-1,d_2}, q \big)  \Big\}. \]
\end{enumerate}
\end{prop}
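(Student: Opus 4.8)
The plan is to deduce the proposition from the already-established description of $\overline{\mathscr{G}}_{x,2}^{(3)}$ in Proposition \ref{prop-x2}, by exploiting the duality $\E \mapsto \E^{\vee}$ on vector bundles. The starting observation is that the outer automorphism $\theta \colon g \mapsto {}^{t}g^{-1}$ of $\GL_3$ preserves $K = \GL_3(\cO_{\A})$ and, modulo the centre, carries the double coset of $\mathrm{diag}(\pi_x, 1, 1)$ to that of $\mathrm{diag}(\pi_x, \pi_x, 1)$ (indeed ${}^{t}\mathrm{diag}(\pi_x,1,1)^{-1} = \mathrm{diag}(\pi_x^{-1},1,1)$, which after scaling by the central $\pi_x \mathrm{I}_3$ and a Weyl permutation in $K$ becomes $\mathrm{diag}(\pi_x,\pi_x,1)$). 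Hence, on $\PGL_3$-automorphic forms, $\theta$ interchanges the Hecke operators $\Phi_{x,1}$ and $\Phi_{x,2}$. Under Weil's Theorem \ref{weil-them}, $\theta$ corresponds to the involution $\E \mapsto \E^{\vee}$ on $\Bun_3 \P^1$, which descends to $\P\Bun_3 \P^1$ because dualization commutes with twisting by line bundles.

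Granting this, I would first make precise that $\overline{\E} \mapsto \overline{\E^{\vee}}$ defines an isomorphism of weighted graphs from $\overline{\mathscr{G}}_{x,1}^{(3)}$ to $\overline{\mathscr{G}}_{x,2}^{(3)}$: it is a bijection on the vertex set $\P\Bun_3 \P^1$ and sends the $\Phi_{x,1}$-neighbours of $\overline{\E}$, together with their multiplicities, to the $\Phi_{x,2}$-neighbours of $\overline{\E^{\vee}}$. Next I would compute this involution on the normal forms introduced above: using $\cO(d)^{\vee} = \cO(-d)$ and reprojectivising, one finds $\varepsilon_0 \mapsto \varepsilon_0$, $\varepsilon_d \mapsto \varepsilon_{d,d}$, and $\varepsilon_{d_1,d_2} \mapsto \varepsilon_{d_2 - d_1,\, d_2}$ (with the convention $\varepsilon_{0,d} = \varepsilon_d$). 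Finally I would transport each edge of Proposition \ref{prop-x2} through this dictionary. For instance $\mathcal{V}_{x,2}(\varepsilon_{d,d})$ becomes $\mathcal{V}_{x,1}(\varepsilon_d)$ once the targets are dualised via $\varepsilon_{d,d+1} \mapsto \varepsilon_{1,d+1}$ and $\varepsilon_{d-1,d-1} \mapsto \varepsilon_{d-1}$, with the weights $q^2+q$ and $1$ unchanged; the three remaining cases are handled identically, the dualities $\varepsilon_{d+1} \mapsto \varepsilon_{d+1,d+1}$, $\varepsilon_{1,d} \mapsto \varepsilon_{d-1,d}$, $\varepsilon_{d_1'+1,d_2'} \mapsto \varepsilon_{d_1-1,d_2}$ and so on matching the stated edges.

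As an alternative, fully self-contained, route one can argue exactly as in Proposition \ref{prop-x2}: by Theorem \ref{theoremcorrespondence} the neighbours of $\overline{\E}$ are read off from the short exact sequences $0 \to \E'' \to \E \to \mathcal{K}_x \to 0$, whose middle-term splitting types are pinned down by the Grothendieck classification, while \cite[Thm. 2.6]{roberto-graphs} forces the multiplicities issuing from a fixed vertex to sum to $q^2+q+1$, the individual weights then being supplied by \cite[Ex. 2.3]{roberto-graphs}. The main obstacle in the duality route is the bookkeeping required to verify that $\E \mapsto \E^{\vee}$ genuinely intertwines the two Hecke actions \emph{with multiplicities intact} — that is, the compatibility of dualization with the adelic description of $\Phi_{x,r}$ and with the vertex-and-multiplicity conventions of \cite{roberto-graphs}. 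In the direct route the delicate point is instead separating the generic modifications from the special (sub-bundle preserving) ones and weighting each correctly, which is precisely where the asymmetric values $1$, $q$, $q+1$, $q^2$, $q^2+q$ originate.
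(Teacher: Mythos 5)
Your main route is exactly the paper's: the paper proves this proposition in one line by combining Proposition \ref{prop-x2} with the duality of \cite[Cor. 2.5]{rov-elliptic1}, which is precisely the graph isomorphism $\overline{\E} \mapsto \overline{\E^{\vee}}$ intertwining $\Phi_{x,1}$- and $\Phi_{x,2}$-neighbours with multiplicities intact that you sketch and then apply, and your dictionary $\varepsilon_0 \mapsto \varepsilon_0$, $\varepsilon_d \mapsto \varepsilon_{d,d}$, $\varepsilon_{d_1,d_2} \mapsto \varepsilon_{d_2-d_1,\,d_2}$ together with the resulting transport of edges is correct. The only difference is that the paper cites this duality as a known result rather than re-deriving it from the outer automorphism $g \mapsto {}^{t}g^{-1}$ as you do.
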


\begin{proof} This follows from last proposition coupled with \cite[Cor. 2.5]{rov-elliptic1}. 
\end{proof}

Above description of the graphs $\overline{\mathscr{G}}_{x,1}^{(3)}$ and $\overline{\mathscr{G}}_{x,2}^{(3)}$ allows us to state and prove the main theorem of this section.

\begin{thm}\label{thm-eigenforms} We fix $x \in |\P^1|$ of degree one and $\lambda_1, \lambda_2 \in \C$. Let $\overline{\E} \in \PBun_3 \P^1$ and $f \in \mathcal{A}^K(x, \lambda_1, \lambda_2)$.  Then $f(\overline{\E})$ is determined by the values of $\lambda_1, \lambda_2$ and $f(\varepsilon_0)$. In particular,
\[ \dim \mathcal{A}^K(x, \lambda_1, \lambda_2) =1.\] 
\end{thm}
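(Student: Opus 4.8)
The plan is to turn the two eigenvalue conditions $\Phi_{x,1}(f)=\lambda_1 f$ and $\Phi_{x,2}(f)=\lambda_2 f$ into an explicit linear recursion on the values of $f$, using Proposition \ref{propfund} to rewrite each Hecke action as the finite sum prescribed by the neighborhoods $\mathcal{V}_{x,1}$ and $\mathcal{V}_{x,2}$ of Propositions \ref{prop-x1} and \ref{prop-x2}, and then to solve it by induction. Writing every class of $\PBun_3\P^1$ as $\varepsilon_{d_1,d_2}$ with $0\leq d_1\leq d_2$ (so $\varepsilon_0=\varepsilon_{0,0}$ and $\varepsilon_d=\varepsilon_{0,d}$), I grade the vertex set by the total degree $N:=d_1+d_2$ and prove, by induction on $N$, that each value $f(\varepsilon_{d_1,d_2})$ with $d_1+d_2=N$ is a fixed linear expression in $\lambda_1,\lambda_2,q$ and $f(\varepsilon_0)$.

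The base cases are immediate: the unique edge out of $\varepsilon_0$ in $\overline{\mathscr{G}}_{x,2}^{(3)}$ gives $f(\varepsilon_1)=\lambda_2 f(\varepsilon_0)/(q^2+q+1)$. For the inductive step at degree $N\geq2$ the crucial feature is that $\Phi_{x,1}$ raises the degree by exactly two, and only at one neighbor: reading Proposition \ref{prop-x1} at a class $\varepsilon_{d_1,d_2}$ of degree $N-2$, the neighbor $\varepsilon_{d_1+1,d_2+1}$ occurs with nonzero coefficient ($q^2$, $q^2+q$, or $q^2+q+1$), while every other neighbor has degree $N-3<N$ and is therefore known by induction. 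Since $\varepsilon_{d_1,d_2}\mapsto\varepsilon_{d_1+1,d_2+1}$ is a bijection from the degree-$(N-2)$ classes onto the degree-$N$ classes with $d_1\geq1$, the $\Phi_{x,1}$-equations at degree $N-2$ determine all degree-$N$ values $f(\varepsilon_{d_1,d_2})$ with $d_1\geq1$. Only the boundary class $\varepsilon_N=\varepsilon_{0,N}$ remains, and I recover it from the $\Phi_{x,2}$-equation at $\varepsilon_{N-1}$, namely
\[\lambda_2\, f(\varepsilon_{N-1})=q^2 f(\varepsilon_N)+(q+1)f(\varepsilon_{1,N-1}),\]
in which $f(\varepsilon_{N-1})$ is known by induction, $f(\varepsilon_{1,N-1})$ was just computed, and the coefficient $q^2$ of $f(\varepsilon_N)$ is nonzero. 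This closes the induction and shows $f$ is completely determined by $f(\varepsilon_0)$, whence $\dim\mathcal{A}^K(x,\lambda_1,\lambda_2)\leq1$.

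The step I expect to be the real obstacle is the reverse inequality $\dim\geq1$, that is, showing that the function built by this recursion is a genuine eigenform for every $(\lambda_1,\lambda_2)$. The recursion used only the $\Phi_{x,1}$-relations together with the $\Phi_{x,2}$-relations at the boundary classes $\varepsilon_{0,d}$; the $\Phi_{x,2}$-relations at the interior classes $\varepsilon_{d,d}$ and $\varepsilon_{d_1,d_2}$ ($1\leq d_1<d_2$) were never imposed, so the linear system is overdetermined and one must check that these surplus relations hold automatically. I would establish this either by deriving closed forms for $f(\varepsilon_{d_1,d_2})$ and verifying the two remaining cases of Proposition \ref{prop-x2} directly---using the commutativity of $\Phi_{x,1}$ and $\Phi_{x,2}$ from Theorem \ref{satake} to structure the computation---or, more cleanly, by invoking existence of eigenforms: by the corollary to Theorem \ref{thm-cusp}, $\mathcal{A}^K$ consists of Borel Eisenstein series, which are $\mathcal{H}_K$-eigenforms whose $(\Phi_{x,1},\Phi_{x,2})$-eigenvalues are, up to normalizing powers of $q$, the first two elementary symmetric functions of the Satake parameters $(\alpha_1,\alpha_2,\alpha_3)$ subject to $\alpha_1\alpha_2\alpha_3=1$. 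Since any prescribed values of these two symmetric functions are attained by taking the $\alpha_i$ to be the roots of a suitable monic cubic with constant term $-1$, every pair $(\lambda_1,\lambda_2)\in\C^2$ is realized, so a nonzero eigenform exists in each $\mathcal{A}^K(x,\lambda_1,\lambda_2)$. Together with the uniqueness bound this yields $\dim\mathcal{A}^K(x,\lambda_1,\lambda_2)=1$ and shows $f(\varepsilon_0)$ is a free parameter.
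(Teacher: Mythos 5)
Your proof is correct, and its core---turning the two eigen-equations into the linear relations dictated by Propositions \ref{prop-x1} and \ref{prop-x2} and solving them by induction over $\PBun_3\P^1$---is the same method as the paper's. The differences are still worth recording. Organizationally, the paper inducts on the three types $\varepsilon_d$, $\varepsilon_{d,d}$, $\varepsilon_{d_1,d_2}$ separately, combining equations pairwise (\eqref{eq2.2} with \eqref{eq1.2}, \eqref{eq1.3} with \eqref{eq2.3}, then \eqref{eq2.4} and \eqref{eq1.4}); you instead grade by total degree $d_1+d_2$ and note that the $\Phi_{x,1}$-relations alone push the induction diagonally, landing bijectively on the degree-$N$ classes with $d_1\geq 1$, so that $\Phi_{x,2}$ is needed only at the boundary classes $\varepsilon_{0,d}$. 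This bookkeeping is tidier and makes explicit exactly which relations are consumed; it also sidesteps the off-by-one slip in the paper's first displayed recursion, where \eqref{eq1.2} at level $d$ is used to eliminate $f(\varepsilon_{1,d})$ although it actually yields $f(\varepsilon_{1,d+1})$ (the correct substitution uses \eqref{eq1.2} at $d-1$). More substantively, you are right that the determination argument only gives $\dim \mathcal{A}^K(x,\lambda_1,\lambda_2)\leq 1$, since the linear system is overdetermined: the paper's proof of the theorem is silent on the reverse inequality, and existence of a nonzero eigenform for every pair $(\lambda_1,\lambda_2)$ enters the paper only afterwards, in the corollary on cusp forms, via the citation of \cite[Thm. 1.7.7]{valdir-thesis}; that cited result is precisely your Eisenstein-series argument, namely that Borel Eisenstein series realize any prescribed pair of elementary symmetric functions of Satake parameters $(\alpha_1,\alpha_2,\alpha_3)$ with $\alpha_1\alpha_2\alpha_3=1$ (roots of a monic cubic with constant term $-1$). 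So your treatment reaches the same conclusion at the same level of rigor, while being more candid about where uniqueness ends and existence must be invoked.
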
 

\begin{proof} Since $f \in \mathcal{A}^K(x, \lambda_1, \lambda_2),$ by definition
$\Phi_{x,1}(f) = \lambda_1 f$ and    $\Phi_{x,2}(f) = \lambda_2 f$. 

Let $d,d_1,d_2$ be positive integers with $d_1 < d_2.$ 
From Proposition \ref{prop-x1} yields
\begin{equation}\tag{1.1} \label{eq1.1}
\lambda_1 f(\varepsilon_0) = (q^2 +q+ 1) f(\varepsilon_{1,1})
\end{equation} 
\begin{equation}\tag{1.2} \label{eq1.2}
\lambda_1 f(\varepsilon_d) = (q^2 +q) f(\varepsilon_{1,d+1}) + f(\varepsilon_{d-1}) 
\end{equation} 
\begin{equation}\tag{1.3} \label{eq1.3}
\lambda_1 f(\varepsilon_{d,d}) = q^2 f(\varepsilon_{d+1,d+1}) + (q+1) f(\varepsilon_{d-1,d})
\end{equation} 
\begin{equation}\tag{1.4} \label{eq1.4}
\lambda_1 f(\varepsilon_{d_1,d_2}) = q^2 f(\varepsilon_{d_1+1,d_2+1}) +  f(\varepsilon_{d_1,d_2-1}) + q f(\varepsilon_{d_1-1,d_2}).
\end{equation} 
From Proposition \ref{prop-x2} yields
\begin{equation}\tag{2.1} \label{eq2.1}
\lambda_2 f(\varepsilon_0) = (q^2 +q+ 1) f(\varepsilon_{1})
\end{equation} 
\begin{equation}\tag{2.2} \label{eq2.2}
\lambda_2 f(\varepsilon_d) = q^2 f(\varepsilon_{d+1}) + (q+1) f(\varepsilon_{1,d}) 
\end{equation} 
\begin{equation}\tag{2.3} \label{eq2.3}
\lambda_2 f(\varepsilon_{d,d}) = (q^2+q) f(\varepsilon_{d,d+1}) + f(\varepsilon_{d-1,d-1})
\end{equation} 
\begin{equation}\tag{2.4} \label{eq2.4}
\lambda_2 f(\varepsilon_{d_1,d_2}) = q f(\varepsilon_{d_1+1,d_2}) + q^2 f(\varepsilon_{d_1,d_2+1}) + f(\varepsilon_{d_1-1,d_2-1}).
\end{equation}
From \eqref{eq1.1} (resp. \eqref{eq2.1}) we can write $f(\varepsilon_{1,1})$ and (resp. $f(\varepsilon_{1})$) in terms of $\lambda_1$ (resp. $\lambda_2$) and $f(\varepsilon_0)$. 
Suppose by induction hypothesis that $f(\overline{\E})$ is determined by $\lambda_1, \lambda_2$ and $f(\varepsilon_0)$ for $\overline{\E}$ equals to $\varepsilon_{d'}, \varepsilon_{d',d'}$ and $\varepsilon_{d_1',d_2'}$ for  all $d' \leq d$, $d_1' \leq d_1$ and $d_2' \leq d_2$ with 
$d_1' \leq d_2'.$ 

The equations $\eqref{eq2.2}$ and $\eqref{eq1.2}$ (in this order) yields
\[f(\varepsilon_{d+1}) = \frac{1}{q^2}\left( \lambda_2 f(\varepsilon_d) 
- \tfrac{q+1}{q^2+q} \Big(\lambda_1 f(\varepsilon_d) - f(\varepsilon_{d-1})\Big) \right)\]
and thus by induction hypothesis $f(\varepsilon_d)$ is determined by $\lambda_1, \lambda_2$ and $f(\varepsilon_0)$ for all positive integer $d$. 

The equations $\eqref{eq1.3}$ and $\eqref{eq2.3}$ yields
\[f(\varepsilon_{d+1,d+1}) = \frac{1}{q^2}\left( \lambda_1 f(\varepsilon_{d,d}) 
- \tfrac{q+1}{q^2+q} \Big(\lambda_2 f(\varepsilon_{d-1,d-1}) - f(\varepsilon_{d-2,d-2})\Big) \right)\]
and thus by induction hypothesis $f(\varepsilon_{d,d})$ is determined by $\lambda_1, \lambda_2$ and $f(\varepsilon_0)$ for all positive integer $d$.

The equation \eqref{eq2.4} yields
\[f(\varepsilon_{d_1,d_2+1}) = \frac{1}{q^2}\left( \lambda_2 f(\varepsilon_{d_1,d_2}) 
- q f(\varepsilon_{d_1+1,d_2}) - f(\varepsilon_{d_1-1,d_2-1}) \right)\]
and by induction hypothesis we are left to show that $f(\varepsilon_{d_1+1,d_2})$
is determined by $\lambda_1, \lambda_2$ and $f(\varepsilon_0).$ If $d_1 +1 = d_2$, then we are done by the case $f(\varepsilon_{d,d})$ above. Otherwise, if $d_2 > d_1+1$, we apply \eqref{eq1.4} replacing $d_2$ by $d_2-1$ and obtain that 
\[f(\varepsilon_{d_1+1,d_2}) = \frac{1}{q^2}\left( 
\lambda_1 f(\varepsilon_{d_1,d_2-1}) -  f(\varepsilon_{d_1,d_2-2}) 
- q f(\varepsilon_{d_1-1,d_2-1}) \right).\]
Thus both $f(\varepsilon_{d_1,d_2+1})$ and $f(\varepsilon_{d_1+1,d_2})$ are determined by 
$\lambda_1, \lambda_2$ and $f(\varepsilon_0)$. 
Finally, identity \eqref{eq1.4} yields that 
\[f(\varepsilon_{d_1+1,d_2+1}) = \frac{1}{q^2} \left( \lambda_1 f(\varepsilon_{d_1,d_2}) -  f(\varepsilon_{d_1,d_2-1}) - q f(\varepsilon_{d_1-1,d_2})
\right) \]
i.e.\ $f(\varepsilon_{d_1+1,d_2+1})$ is determined by $\lambda_1, \lambda_2$ and $f(\varepsilon_0)$. By induction hypothesis we conclude that $f(\varepsilon_{d_1,d_2})$ is determined by $\lambda_1, \lambda_2$ and $f(\varepsilon_0)$ for all positive integers $d_1,d_2$ with $d_1< d_2$.  
\end{proof}

As a byproduct of previous theorem we have Theorem \ref{thm-cusp} for $n=3$.

\begin{cor} There are no unramified cusp forms for $\PGL_3$ over the projective line. 
\end{cor}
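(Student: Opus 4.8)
The plan is to deduce this as a byproduct of the eigenform parametrization in Theorem \ref{thm-eigenforms}, giving an alternative route to what is already the special case $n=3$ of Theorem \ref{thm-cusp}. First I would reduce the problem to a single Hecke eigenform. The space $\cA_0^K$ of unramified cusp forms is finite dimensional by the cuspidal finiteness theorem over function fields, and it is stable under the spherical Hecke algebra $\mathcal{H}_K$; since $\mathcal{H}_K$ is commutative by Satake's Theorem \ref{satake}, a nonzero $\cA_0^K$ would necessarily contain a nonzero simultaneous eigenform $f$ for all of $\mathcal{H}_K$. Fixing the degree one point $x \in |\P^1|$ of Theorem \ref{thm-eigenforms}, such an $f$ is in particular both a $\Phi_{x,1}$-eigenform and a $\Phi_{x,2}$-eigenform, with some eigenvalues $\lambda_1,\lambda_2 \in \C$, so that $f \in \mathcal{A}^K(x,\lambda_1,\lambda_2)$.

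Next I would apply Theorem \ref{thm-eigenforms}, which tells us that $f(\overline{\E})$ is determined, for every $\overline{\E} \in \PBun_3 \P^1$, by $\lambda_1,\lambda_2$ and the single value $f(\varepsilon_0)$. Hence it suffices to prove $f(\varepsilon_0)=0$. For this I would feed the geometric cuspidality criterion of Lemma \ref{lemma-geocusp} with the rank decomposition $3 = 1+2$, taking $\F = \cO$ and $\G = \cO \oplus \cO$. Exactly as in the proof of Theorem \ref{thm-cusp}, Serre duality gives $\Ext^1(\F,\G)^{\vee} \cong \Hom(\cO \oplus \cO, \cO(-2)) = 0$, so the split bundle $\varepsilon_0 = \cO^{\oplus 3}$ is the only extension and the cuspidal sum \eqref{cuspeq} collapses to $f(\varepsilon_0)=0$.

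Combining the two steps, $f(\varepsilon_0)=0$ forces $f \equiv 0$ by Theorem \ref{thm-eigenforms}, contradicting $f \neq 0$; therefore $\cA_0^K$ is trivial for $\PGL_3$ over $\P^1$. The step I expect to be the main obstacle is the reduction carried out in the first paragraph: one must justify both that $\cA_0^K$ is finite dimensional and that it splits into $\mathcal{H}_K$-eigenspaces. Finite dimensionality is the classical cuspidal finiteness statement, and the eigenspace decomposition then follows from the commutativity of $\mathcal{H}_K$ recorded in Theorem \ref{satake}; once these are granted, the remaining steps are immediate applications of results established above.
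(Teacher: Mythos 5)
Your proof is correct, and it takes a genuinely different route from the paper's. The paper also reduces to eigenforms, but via the decomposition of $\mathcal{A}_0$ into irreducible representations, and it then disposes of a cuspidal eigenform by an Eisenstein-series argument: by \cite[Thm. 1.7.7]{valdir-thesis}, every pair $(\lambda_1,\lambda_2)$ is realized by a nontrivial Eisenstein series induced from the Borel, so Theorem \ref{thm-eigenforms} forces the one-dimensional space $\mathcal{A}^K(x,\lambda_1,\lambda_2)$ to be spanned by that Eisenstein series, and since cusp forms cannot be Eisenstein (\cite[Thm. 1.7.9]{valdir-thesis}) one gets $\mathcal{A}_0^K \cap \mathcal{A}^K(x,\lambda_1,\lambda_2)=\{0\}$, which is equation (\ref{eq-tr}) of the paper. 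You instead kill the eigenform internally: cuspidality applied to the single decomposition $\F=\cO$, $\G=\cO\oplus\cO$, where $\Ext^1(\F,\G)=0$ by Serre duality, collapses the sum \eqref{cuspeq} to $f(\varepsilon_0)=0$, and Theorem \ref{thm-eigenforms} then propagates this to $f\equiv 0$. What your route buys is self-containedness: apart from the spectral reduction, every ingredient (Lemma \ref{lemma-geocusp}, Theorem \ref{thm-eigenforms}, and the Serre-duality computation already present in the proof of Theorem \ref{thm-cusp}) is internal to the paper, and the citations to \cite{valdir-thesis} are avoided entirely; what the paper's route buys is the sharper intermediate statement (\ref{eq-tr}), identifying each eigenspace as Eisenstein rather than merely non-cuspidal. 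On the spectral reduction itself the two arguments lean on comparable classical inputs, neither proved in the paper: the paper invokes the decomposition of the cuspidal spectrum into irreducibles, while you invoke the finite-dimensionality of $\mathcal{A}_0^K$ together with its $\mathcal{H}_K$-stability and the commutativity of $\mathcal{H}_K$ from Theorem \ref{satake}; both of these facts are standard over function fields, and you correctly flag this reduction as the one step requiring outside justification.
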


\begin{proof} Let $T$ be diagonal torus of $\GL_3$. Given $\lambda_1, \lambda_2 \in \C$ and $x \in \P^1$ a closed point of degree one, there exists a nontrivial Eisenstein series induced from an unramified character of $T$ which is an eigenfunction for $\Phi_{x,r}$ ($r=1,2$) with eigenvalues $\lambda_1, \lambda_2$, see \cite[Thm. 1.7.7]{valdir-thesis}. 
It follows from \cite[Thm. 1.7.9]{valdir-thesis} and above theorem that 
\begin{equation} \label{eq-tr}
A_{0}^K \cap   \mathcal{A}^K(x, \lambda_1, \lambda_2) = \{0\}.
\end{equation}
Furthermore, $\mathcal{A}_{0}$ splits as a direct sum of irreducible representations. Therefore, 
we can write every $f \in \mathcal{A}_{0}^{K}$ as a sum of eigenforms and thus $f=0$ by above (\ref{eq-tr}).

\end{proof}

   
\section{Toroidal autormophic forms}   \label{sec-toroidal}

We define the space of toroidal automorphic forms for any global function field $F$ and, using the results from previous sections, we derive some results when $F$ is the function field of $\mathbb{P}^1$. 

Let $F$ be any global function field over $\Fq$ and $E/F$ be a separable field extension of degree $n$.
 Choosing a basis for $E$ over $F$ gives an embedding of $E^{*}$ in $G(F)$ and a non-split maximal torus $T \subseteq G$ with $T(F)= E^{*}$ and $T(\mathbb{A}_F) = \mathbb{A}_{E}^{*}$. In this case we say that $T$ is associates to $E/F$. We refer \cite[Def. 1.5.1]{oliver-thesis} for the definitions of non-split and maximal torus. 

\begin{df} Let $T$ be a maximal torus of $\GL_n$ over $F$ associated with a separable extension $E/F$ of degree $n$. Let $\mathbb{A}:= \mathbb{A}_F$. Endow $T(\mathbb{A})$ and $T(F)Z(\mathbb{A}) $ with the Haar measures  and $T(F)Z(\mathbb{A})\setminus T(\mathbb{A})$ with the quotient measure. For $f \in \mathcal{A}$ we define 
\[f_T(g) := \int_{T(F)Z(\mathbb{A})\setminus T(\mathbb{A})} f(tg)dt\]
the \textit{toroidal integral} of $f$ along $T$.
\end{df}

\begin{rem} The quotient $T(F)Z(\mathbb{A})\setminus T(\mathbb{A}) $ is compact, see \cite[Pag. 42]{valdir-thesis}. \end{rem}

\begin{df} \label{def-toroidal} Let $T$ be a maximal torus of $\GL_n$ over $F$ associated with a separable extension $E/F$ of degree $n$. We define
\[\mathcal{A}_{tor}(E) := \big\{f \in \mathcal{A} \;|\; f_T(g)=0, \forall g \in G(\mathbb{A})\big\}\]
the space of $E$\textit{-toroidal automorphic forms}, and
\[\mathcal{A}_{tor} = \bigcap_{E/F} \mathcal{A}_{tor}(E)\]
the space of \textit{toroidal automorphic forms}, where $E/F$ runs over the separable extensions of degree $n$. 
\end{df}

\begin{rem} The spaces $\mathcal{A}_{tor}(E)$ do not depend on the choice of the basis for $E$ over $F$, see \cite[Rem. 2.1.3]{valdir-thesis}. \end{rem}

\begin{thm}\label{thm-5.5} Let $F $ be the function field of $\mathbb{P}^1$ defined over $\Fq$ and $E$ be the constant field extension of $F$ of degree $3$, i.e.\ $E=\mathbb{F}_{q^3}F.$ If $x$ a degree one place of $F$ and $\lambda_1, \lambda_2 \in \C$, then
\[\mathcal{A}_{tor}(E) \cap \mathcal{A}^K(x, \lambda_1, \lambda_2) = \{0\}\]
there does not exist nontrivial $\Phi_{x,r}$-toroidal eigenforms for $n=3$ and $r=1,2$. 
\end{thm}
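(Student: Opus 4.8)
The plan is to leverage the one-dimensionality proved in Theorem~\ref{thm-eigenforms}. Since $\dim \mathcal{A}^K(x,\lambda_1,\lambda_2)=1$ and the recursion in that proof shows that the single value $f(\varepsilon_0)$ determines $f$ entirely (so that $f\neq 0$ forces $f(\varepsilon_0)\neq 0$), it suffices to exhibit \emph{one} $g\in G(\mathbb{A})$ at which the toroidal integral $f_T(g)$ is a nonzero multiple of $f(\varepsilon_0)$: then the toroidal condition $f_T\equiv 0$ forces $f(\varepsilon_0)=0$, hence $f\equiv 0$. I would take $g=e$, so that the integrand is $f(t)$ for $t\in T(\mathbb{A})=\mathbb{A}_E^{*}$, and show that $f$ is in fact \emph{constant} on $T(\mathbb{A})$, equal to $f(\varepsilon_0)$. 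Note that this last fact uses only the invariances of $f$, not the eigenform structure.

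First I would fix an integral basis of $E/F$, which is possible because the constant extension $E=\mathbb{F}_{q^3}F$ is everywhere unramified; by the basis-independence of $\mathcal{A}_{tor}(E)$ recorded after Definition~\ref{def-toroidal} this is harmless. With such a basis the maximal compact $\mathcal{O}_{\mathbb{A}_E}^{*}$ of $T(\mathbb{A})$ preserves the standard lattice and is therefore carried into $K=\GL_3(\mathcal{O}_{\mathbb{A}})$. Consequently $f|_{T(\mathbb{A})}$ is left $T(F)=E^{*}$-invariant, left $Z(\mathbb{A})=\mathbb{A}_F^{*}$-invariant, and right $\mathcal{O}_{\mathbb{A}_E}^{*}$-invariant, so it factors through the double quotient $E^{*}\mathbb{A}_F^{*}\backslash \mathbb{A}_E^{*}/\mathcal{O}_{\mathbb{A}_E}^{*}$.

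The heart of the argument is that this double quotient is a single point. Since $X_E=\mathbb{P}^1_{\mathbb{F}_{q^3}}$ again has genus $0$, the degree map gives $\mathbb{A}_E^{*}/E^{*}\mathcal{O}_{\mathbb{A}_E}^{*}\cong \Pic(X_E)\cong \mathbb{Z}$ (here $\Pic^0(X_E)=0$). It then remains to check that the central subgroup $\mathbb{A}_F^{*}$ already surjects onto this $\mathbb{Z}$, and this is exactly where the hypothesis $\deg x=1$ enters: under the degree-$3$ constant extension a rational place $x$ of $\mathbb{P}^1_{\mathbb{F}_q}$ stays inert, producing a single place $w$ above it with residue field $\mathbb{F}_{q^3}$, hence of degree one on $X_E$. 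Thus the central idele supported at $x$ with a uniformizer $\pi_x$ maps to a degree-one class, i.e. a generator of $\Pic(X_E)$, so $\mathbb{A}_E^{*}=E^{*}\mathbb{A}_F^{*}\mathcal{O}_{\mathbb{A}_E}^{*}$. Writing any $t=\gamma z k$ with $\gamma\in E^{*}$, $z\in \mathbb{A}_F^{*}$, $k\in\mathcal{O}_{\mathbb{A}_E}^{*}$ then gives $f(t)=f(k)=f(e)=f(\varepsilon_0)$.

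Finally, as $T(F)Z(\mathbb{A})\backslash T(\mathbb{A})$ is compact with positive Haar volume, this yields $f_T(e)=\mathrm{vol}\big(T(F)Z(\mathbb{A})\backslash T(\mathbb{A})\big)\,f(\varepsilon_0)$, a nonzero multiple of $f(\varepsilon_0)$; hence any $f$ in $\mathcal{A}_{tor}(E)\cap \mathcal{A}^K(x,\lambda_1,\lambda_2)$ has $f(\varepsilon_0)=0$ and is therefore $0$ by Theorem~\ref{thm-eigenforms}. I expect the only delicate points to be the two structural inputs of the middle step: arranging the integral embedding so that $\mathcal{O}_{\mathbb{A}_E}^{*}\subseteq K$, and—more importantly—the single-class computation, namely that the center surjects onto $\Pic(X_E)$. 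The latter is really the statement that $\mathbb{P}^1$ has class number one together with the fact that a rational place stays rational under the constant extension; were $x$ of higher degree, or the base curve of positive genus, the double quotient could acquire several points and $f_T(e)$ would instead be a nontrivial finite combination of values of $f$, requiring the explicit graph data of Propositions~\ref{prop-x1} and~\ref{prop-x2} to handle.
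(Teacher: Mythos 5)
Your proof is correct, and its endgame coincides with the paper's: everything is funneled into showing that toroidality forces $f(\varepsilon_0)=0$, after which Theorem~\ref{thm-eigenforms} (whose recursions are linear and homogeneous in the values of $f$) gives $f\equiv 0$. The difference lies in how the period $f_T(e)$ is evaluated. The paper imports this step as a black box from the second author's thesis, \cite[Thm. 2.8.1]{valdir-thesis}, which writes the toroidal period as $c_T\sum f(\overline{p_*\Line})$ with the sum over $\Pic \P_3^1/p^{*}\Pic\P^1$, and then uses tacitly that this quotient is a single class (since $p^{*}\cO_{\P^1}(1)=\cO_{\P_3^1}(1)$ and $\P^1_{\mathbb{F}_{q^3}}$ has class number one), so that the sum collapses to the single term $f(\overline{p_{*}\cO})=f(\varepsilon_0)$. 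Your computation that $E^{*}\A_F^{*}\backslash\A_E^{*}/\cO_{\A_E}^{*}$ is a point --- via $\A_E^{*}/E^{*}\cO_{\A_E}^{*}\cong\Pic(X_E)\cong\Z$ and surjectivity of the center onto it through the inert rational place --- is precisely a self-contained proof of that collapsed formula in this genus-zero case, with the nice feature that the constancy of $f$ on $T(\A)$ uses only the invariances of $f$. What your route buys is an elementary argument in which the two geometric inputs (class number one of $X_E$, and $p^{*}$ onto) are explicit rather than silent; what the cited formula buys is generality: it holds for any curve and any separable extension, where $\Pic X_E/p^{*}\Pic X$ is genuinely nontrivial and the period is a sum of several values of $f$, which is the form needed beyond genus zero. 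Two small corrections of emphasis. First, the existence of a globally integral basis is cleanest not from ``everywhere unramified'' but simply by taking a basis of $\mathbb{F}_{q^3}$ over $\Fq$, for which $\mathcal{O}_x\otimes_{\Fq}\mathbb{F}_{q^3}=\prod_{w\mid x}\mathcal{O}_w$ at every place $x$. Second, the hypothesis $\deg x=1$ is not really consumed by your class-group step: the double quotient is independent of $x$, and its triviality needs only the existence of \emph{some} rational place of $\P^1$, which is automatic; the hypothesis is consumed by Theorem~\ref{thm-eigenforms} itself, whose inputs (Propositions~\ref{prop-x1} and~\ref{prop-x2}) are stated only for degree-one $x$. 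For the same reason, your closing remark should blame positive genus or a non-constant extension --- not a higher-degree $x$ --- for the double quotient acquiring several points.
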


\begin{proof}Let $T$ be the $3$-dimensional torus associated to $E/F$, where $E = \mathbb{F}_{q^3}F.$ Thus $E$ is the function field of $\mathbb{P}_{3}^{1} := \mathbb{P}^1 \otimes_{\mathrm{Spec}\Fq} \mathrm{Spec} \mathbb{F}_{q^3}$, the extension of scalars of $\mathbb{P}^1$. The extension of scalars yields 
\[p: \mathbb{P}_{3}^{1} \rightarrow \P^1\]
the projection map. 
Moreover $p$ induces the inverse image $p^{*} : \Bun_3 X \rightarrow \Bun_3 \mathbb{P}_{3}^{1}$ and the direct image (or trace) $p_{*}: \Bun_1 \mathbb{P}_{3}^{1} \rightarrow \Bun_3 \mathbb{P}^{1}$. 

From \cite[Thm. 2.8.1]{valdir-thesis} 
\[ \int_{T(F)Z(\mathbb{A})\setminus T(\mathbb{A})} f(tg)\mu(t) = c_T \cdot \sum_{[\Line] \in \mathrm{Pic} \mathbb{P}_{3}^{1}/p^{*}\mathrm{Pic}\mathbb{P}^{1}} f(\overline{p_{*}\Line})\]
where   and 
$$c_T = \frac{vol(T(F)Z(\A)\setminus T(\A))}{\# (\Pic\mathbb{P}_{3}^{1}/p^*\Pic \P^1)}$$

Hence $f \in \mathcal{A}(x, \lambda_1, \lambda_2)$ is $E$-toroidal, thus  
$f(\varepsilon_0) =0$. 
Therefore Theorem \ref{thm-eigenforms} implies that if $f(\varepsilon_0)=0$, then $f$ is trivial.
\end{proof}

Since the zeta function of $\P^1$ has no zeros, a possible connection with the space of toroidal automorphic forms lead us to the following conjecture. 

\begin{conj} For all $n \geq 0$, $\mathcal{A}_{tor} = \{0\}.$
\end{conj}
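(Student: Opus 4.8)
The plan is to prove the conjecture by combining the spectral decomposition of $\mathcal{A}$ with the Zagier--Hecke philosophy that toroidal periods are governed by zeros of zeta and $L$-functions, the operative point being that the zeta function of $\P^1$ has no zeros. The cases $n\le 1$ are degenerate: for $n=1$ the torus associated to the trivial extension is all of $\GL_1=Z$, so $T(F)Z(\A)\setminus T(\A)$ is a point and $f_T=0$ forces $f\equiv 0$; for $n=0$ the statement is vacuous. So I would assume $n\ge 2$. A one-line computation shows that $f\mapsto f_T$ commutes with the right Hecke action, $(\Phi f)_T=\Phi(f_T)$ for all $\Phi\in\mathcal{H}$, so each $\mathcal{A}_{tor}(E)$, and hence $\mathcal{A}_{tor}$, is an $\mathcal{H}$-submodule of $\mathcal{A}$. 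I would therefore pass to the Langlands spectral decomposition of $\mathcal{A}$ into its cuspidal, residual and continuous (Eisenstein) parts and reduce the conjecture to showing that no nonzero vector in any irreducible constituent can be toroidal for \emph{every} degree $n$ separable extension $E/F$ simultaneously.

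Second, I would dispatch the Eisenstein and residual spectrum. These representations are parabolically induced from cuspidal data on proper Levi subgroups $M\cong\prod_i\GL_{n_i}$, so an induction on $n$ applies, with Section~\ref{sec-cusp} and Theorems~\ref{thm-eigenforms} and~\ref{thm-5.5} furnishing the ground floor. The key input is a Zagier--Hecke type formula expressing the toroidal period $f_T(g)$ of such a series as a finite product of values of completed zeta functions of $F$ and of the extensions attached to $T$, divided by zeta values of $F$; this generalises the explicit computation underlying Theorem~\ref{thm-5.5}. Since $\zeta_{\P^1}(s)=\big((1-q^{-s})(1-q^{1-s})\big)^{-1}$ has no zeros, the ``$\GL_1$-directions'' of every such period are nonvanishing, while the remaining factors, attached to the cuspidal blocks on the smaller $\GL_{n_i}$, are controlled by the inductive hypothesis. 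This would show that over $\P^1$ no nonzero Eisenstein or residual form is toroidal, recovering Theorem~\ref{thm-5.5} and extending it to all $n$ and all tori.

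The main obstacle is the cuspidal spectrum. By Theorem~\ref{thm-cusp} there are no unramified cusp forms over $\P^1$, but ramified cuspidal representations of $\GL_n$ do exist for $n\ge 2$, and these lie outside the reach of the graphs-of-Hecke-operators combinatorics available for $\mathcal{A}^K$. For a cuspidal $\pi$ I would invoke a period formula of Waldspurger type (for $n=2$) together with its higher-rank analogues, relating the squared toroidal period along $T_E$ to central twisted $L$-values $L(\tfrac12,\pi\otimes\chi)$ as $\chi$ ranges over the characters of the idele class group of $E$. The conjecture then reduces to a simultaneous non-vanishing statement: for a fixed nontrivial cuspidal $\pi$ over $\Fq(t)$, these central values cannot all vanish as $E$ and $\chi$ vary. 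Establishing this non-vanishing --- or, alternatively, exhibiting a Galois-side obstruction special to $\P^1$ that directly excludes cuspidal toroidal forms --- is the genuinely hard analytic core of the statement and the reason it is posed only as a conjecture; I expect any complete proof to stand or fall on this step.
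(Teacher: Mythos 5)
The statement you are asked about is posed in the paper as a \emph{conjecture}: the paper offers no proof, only the heuristic that $\zeta_{\P^1}$ has no zeros and the partial result (the final theorem of Section \ref{sec-toroidal}) that $\mathcal{A}_{tor}\cap\mathcal{A}^K=\{0\}$ for $n=3$, obtained by testing against the single constant extension $E=\mathbb{F}_{q^3}F$ and invoking Theorems \ref{thm-eigenforms} and \ref{thm-5.5}. Your proposal is an honest research program, not a proof, and you say so yourself; but it is worth naming precisely where it fails beyond the gap you acknowledge. First, the ``higher-rank analogues of Waldspurger'' you invoke for the cuspidal spectrum do not exist: for $n\geq 3$ the period of a cusp form on $\GL_n$ along a maximal torus $T_E$ is not Eulerian (the pair $(\GL_n,T_E)$ is not a Gelfand pair in this range), and there is no established formula relating $f_T$ to central values $L(\tfrac12,\pi\otimes\chi)$. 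So even granting every analytic non-vanishing statement you would ``reduce to,'' the reduction itself is unproven. Second, your Eisenstein step as stated is too optimistic: for a \emph{non-constant} degree $n$ extension $E/F$, the field $E$ is the function field of a curve of possibly positive genus, and $\zeta_E$ \emph{does} have zeros; consequently individual spaces $\mathcal{A}_{tor}(E)$ are expected to be nonzero (this is exactly the Zagier--Lorscheid phenomenon), and the nonvanishing of $\zeta_{\P^1}$ only controls the constant-extension periods. Any correct argument must exploit the intersection over all $E$ --- in practice, as in the paper's partial theorem, it suffices to use the constant extension alone, whose zeta again has no zeros; your phrasing ``the $\GL_1$-directions of every such period are nonvanishing'' obscures this and would fail if read literally for general $E$.

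There is also a structural issue at the very first step: you pass to ``the Langlands spectral decomposition of $\mathcal{A}$ into cuspidal, residual and continuous parts'' and then argue constituent by constituent. The space $\mathcal{A}$ of the paper consists of smooth, $K$-finite functions of moderate growth, not $L^2$-vectors, and the continuous spectrum does not decompose discretely; an element of the ``Eisenstein part'' is in general a finite linear combination of Eisenstein series \emph{and their derivatives in the spectral parameter}, so toroidality of a form does not immediately reduce to toroidality of irreducible constituents. For $\GL_2$ Lorscheid handles exactly this by proving that $\mathcal{A}$ is an admissible $\mathcal{H}$-module built from derivatives of Eisenstein series, but the analogous decomposition for $\GL_n$ over function fields is a substantial input you would need to establish, not quote. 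In summary: your plan correctly reproduces the paper's motivating heuristic and its treatment of the unramified Eisenstein spectrum (where it essentially recovers Theorems \ref{thm-5.5} and \ref{thm-eigenforms} via the constant extension), but the cuspidal step rests on period formulas that are not available for $n\geq 3$, the non-constant extensions are mishandled, and the spectral reduction is unjustified as stated --- which is consistent with the fact that the paper leaves the statement as a conjecture.
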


We end this article with the following partial answer for previous conjecture. 

\begin{thm} Let $F $ be the function field of $\mathbb{P}^1$ defined over $\Fq$ and $E$ be the constant field extension of $F$ of degree $3$, i.e.\ $E=\mathbb{F}_{q^3}F.$ Then,
$\mathcal{A}_{tor}(E) \cap \mathcal{A}^K = \{0\} $ and therefore 
$\mathcal{A}_{tor} \cap \mathcal{A}^K = \{0\}$ for $n=3$.
\end{thm}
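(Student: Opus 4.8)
The plan is to reduce the final statement to the eigenform result of Theorem \ref{thm-5.5} by decomposing an arbitrary unramified $E$-toroidal form into $\Phi_{x,r}$-eigencomponents. The key point is that the subspace $\mathcal{A}^K$ carries a commuting family of operators $\Phi_{x,1},\Phi_{x,2}$ (the spherical Hecke operators at our fixed degree-one place $x$), and the space of automorphic forms has enough structure to diagonalize these. I would first recall that $\mathcal{H}_K$ is commutative (Theorem \ref{satake}) and that the toroidal integral $f_T$ commutes with the action of the Hecke algebra — this is essentially because $\Phi_{x,r}$ acts by right translation while the toroidal integral is taken over the left torus action, so the operators preserve $\mathcal{A}_{tor}(E)$. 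Thus $\mathcal{A}_{tor}(E)\cap\mathcal{A}^K$ is itself stable under $\Phi_{x,1}$ and $\Phi_{x,2}$.

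First I would establish that every $f\in\mathcal{A}_{tor}(E)\cap\mathcal{A}^K$ decomposes as a (possibly infinite) combination of simultaneous $\Phi_{x,r}$-eigenforms, each of which still lies in $\mathcal{A}_{tor}(E)$. The cleanest route is to invoke the spectral decomposition of $\mathcal{A}$ into eigenspaces for the commutative algebra $\mathcal{H}_{K_x}$ generated by $\Phi_{x,1},\Phi_{x,2}$; since $\mathcal{A}_{tor}(E)$ is $\mathcal{H}_K$-stable, it inherits this decomposition. Concretely, $\mathcal{A}_{tor}(E)\cap\mathcal{A}^K = \bigoplus_{(\lambda_1,\lambda_2)} \bigl(\mathcal{A}_{tor}(E)\cap\mathcal{A}^K(x,\lambda_1,\lambda_2)\bigr)$, where the sum runs over the joint eigenvalues appearing. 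This mirrors exactly the argument already used in the corollary to Theorem \ref{thm-cusp}, where $\mathcal{A}_0^K$ was shown trivial by writing each cusp form as a sum of eigenforms.

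Once this decomposition is in place, the conclusion is immediate: by Theorem \ref{thm-5.5}, each summand $\mathcal{A}_{tor}(E)\cap\mathcal{A}^K(x,\lambda_1,\lambda_2)=\{0\}$, hence the whole intersection $\mathcal{A}_{tor}(E)\cap\mathcal{A}^K$ vanishes. The final assertion $\mathcal{A}_{tor}\cap\mathcal{A}^K=\{0\}$ then follows trivially, since $\mathcal{A}_{tor}=\bigcap_{E'/F}\mathcal{A}_{tor}(E')\subseteq\mathcal{A}_{tor}(E)$ for our particular choice of $E=\mathbb{F}_{q^3}F$, so intersecting with $\mathcal{A}^K$ only shrinks the already-trivial space.

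The main obstacle I anticipate is justifying the eigenform decomposition rigorously in the function-field setting: one must confirm that $\mathcal{A}^K$ (or at least the relevant toroidal subspace) admits a basis of simultaneous $\mathcal{H}_{K_x}$-eigenforms, rather than merely a filtration on which the operators act triangularly. The commutativity from Theorem \ref{satake} gives simultaneous diagonalizability only if the operators are semisimple on the space in question; for a general $f$ one would want to appeal to the admissibility built into the definition of $\mathcal{A}$ together with the finiteness of $\mathcal{H}_K\cdot f$, or else cite the relevant decomposition theorem (as in \cite{valdir-thesis}). Verifying that the toroidal integral genuinely commutes with $\Phi_{x,r}$ — so that the eigencomponents remain toroidal — is the technical heart, but it reduces to the left-invariance of the integral against the right Hecke action and should follow from the explicit finite-sum description in Proposition \ref{propfund}.
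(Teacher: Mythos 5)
Your reduction to Theorem \ref{thm-5.5} is the right idea, and two of your supporting observations are correct and necessary: that $\Phi_{x,1},\Phi_{x,2}$ preserve $\mathcal{A}_{tor}(E)$ (by Fubini, since the toroidal integral is a left integral and the Hecke action a right one), and that $\mathcal{A}_{tor}\subseteq\mathcal{A}_{tor}(E)$, which gives the final assertion. The gap is precisely the step you flag as the ``main obstacle'' and never resolve: the claimed decomposition
\[
\mathcal{A}_{tor}(E)\cap\mathcal{A}^K \;=\; \bigoplus_{(\lambda_1,\lambda_2)}\bigl(\mathcal{A}_{tor}(E)\cap\mathcal{A}^K(x,\lambda_1,\lambda_2)\bigr)
\]
is unjustified and in general false. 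Commutativity of $\mathcal{H}_K$ (Theorem \ref{satake}) only yields a \emph{generalized} eigenspace decomposition on finite-dimensional invariant subspaces; genuine semisimplicity fails on $\mathcal{A}^K$, because derivatives of Eisenstein series in the spectral parameter are automorphic forms on which the Hecke operators act through nontrivial Jordan blocks. Worse, such Jordan blocks can occur inside toroidal spaces themselves: for other curves, spaces of toroidal forms contain derivatives of Eisenstein series at higher-order zeta zeros (cf.\ \cite{oliver-toroidal}), so one cannot assume the space one is trying to show is zero is spanned by eigenforms. The analogy you draw with the corollary to Theorem \ref{thm-cusp} does not transfer: there the argument uses that $\mathcal{A}_0$ splits into irreducible representations, a special (semisimplicity) property of the cuspidal spectrum which neither $\mathcal{A}^K$ nor its toroidal subspace is known to share.

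The repair is the ingredient you mention in passing (admissibility plus finiteness of $\mathcal{H}_K\cdot f$) but do not carry out, and it is exactly the paper's proof: argue by contradiction with a \emph{single} common eigenvector instead of a full diagonalization. If $f\neq 0$ lies in $\mathcal{A}_{tor}(E)\cap\mathcal{A}^K$, then $V:=\mathcal{H}_K\cdot f$ is finite dimensional by admissibility and nonzero since $\epsilon_K f=f$; by your stability argument $V\subseteq\mathcal{A}_{tor}(E)\cap\mathcal{A}^K$, and $V$ is invariant under the commuting operators $\Phi_{x,1},\Phi_{x,2}$. Two commuting operators on a nonzero finite-dimensional complex vector space always admit a common eigenvector --- no semisimplicity is needed --- and that eigenvector is then a nonzero element of $\mathcal{A}_{tor}(E)\cap\mathcal{A}^K(x,\lambda_1,\lambda_2)$, contradicting Theorem \ref{thm-5.5}. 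With this substitution your argument closes; as written, it rests on a decomposition that does not exist.
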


\begin{proof} We suppose by contradiction that $\mathcal{A}_{tor}(E) \cap \mathcal{A}^K \neq \{0\}.$ Hence, let $f \in \mathcal{A}_{tor}(E) \cap \mathcal{A}^K$ such that $f \neq 0,$ By admissibility condition, $V = \mathcal{H}_K \cdot f$ is a finite dimensional vector space, Moreover, 
 $V$ is invariant by the action of $\Phi_{x,1}, \Phi_{x,2} \in \mathcal{H_K}$ for some $x \in |\mathbb{P}^1|$ of degree one. Thus, there exists a $\Phi_{x,r}$-eigenform (for $r=1,2$) in $V$, which disagree with Theorem \ref{thm-5.5}. 
\end{proof}

\newcommand{\etalchar}[1]{$^{#1}$}


\end{document}